\documentclass[12pt]{amsart}
\usepackage{etex}
\usepackage{amsfonts, amssymb, latexsym}

\usepackage{amscd,amssymb}
\usepackage{verbatim}
\usepackage{pstricks}
\usepackage{pst-node}
\usepackage[mathscr]{euscript}
 \usepackage{tikz}
\usetikzlibrary{matrix,arrows}
\usepackage[normalem]{ulem}

\newsymbol\pp 1275
\usepackage{tikz}
\usetikzlibrary{matrix,arrows,backgrounds,shapes.misc,shapes.geometric,patterns,calc,positioning}

\usepackage[colorlinks=true, pdfstartview=FitV, linkcolor=blue, citecolor=blue, urlcolor=blue]{hyperref}

\usepackage{fullpage}
\usepackage{graphicx}
\usepackage{enumerate}

\def\VR{\kern-\arraycolsep\strut\vrule &\kern-\arraycolsep}
\def\vr{\kern-\arraycolsep & \kern-\arraycolsep}

%%%%%%%%%
\newtheorem{theorem}{Theorem}

\newtheorem{lemma}[theorem]{Lemma}
\newtheorem{prop}[theorem]{Proposition}
\newtheorem{corollary}[theorem]{Corollary}
% all text will be supplied in the option

\theoremstyle{definition}
\newtheorem{definition}[theorem]{Definition}

\newtheorem{rmk}[theorem]{Remark}
\newenvironment{remark}[1][]{\begin{rmk}[#1]\pushQED{\qed}}{\popQED \end{rmk}}

\newtheorem*{rmknonum}{Remark}

\newtheorem{obs}[theorem]{Observation}

\newtheorem{ex}[theorem]{Example}
\newenvironment{example}[1][]{\begin{ex}[#1]\pushQED{\qed}}{\popQED \end{ex}}
\newcommand{\pctext}[2]{\text{\parbox{#1}{\centering #2}}}

\newcommand{\Hom}{\operatorname{Hom}}

\newcommand{\ext}{\operatorname{ext}}
\newcommand{\rep}{\operatorname{rep}}
\newcommand{\Proj}{\operatorname{Proj}}

\newcommand{\SI}{\operatorname{SI}}

\newcommand{\GL}{\operatorname{GL}}
\newcommand{\PGL}{\operatorname{PGL}}

\newcommand{\ZZ}{\mathbb Z}

\newcommand{\PP}{\mathbb P}

\newcommand{\Ker}{\operatorname{Ker}}

\newcommand{\res}{\operatorname{res}}
\newcommand{\Id}{\operatorname{Id}}

\newcommand{\supp}{\operatorname{supp}}
\newcommand{\Mat}{\operatorname{Mat}}

\newcommand{\Spec}{\operatorname{Spec}}

\newcommand{\filt}{\mathcal Filt}

\newcommand{\ddim}{\operatorname{\mathbf{dim}}}
\newcommand{\cc}{\operatorname{\mathbf{c}}}
\newcommand{\dd}{\operatorname{\mathbf{d}}}
\newcommand{\ee}{\operatorname{\mathbf{e}}}

\newcommand{\ff}{\operatorname{\mathbf{f}}}

\newcommand{\m}{\mathfrak{m}}

\newcommand{\R}{\operatorname{\mathcal{R}}}
\newcommand{\M}{\operatorname{\mathcal{M}}}

\newcommand\restr[2]{{% we make the whole thing an ordinary symbol
  \left.\kern-\nulldelimiterspace % automatically resize the bar with \right
  #1 % the function
  \vphantom{\big|} % pretend it's a little taller at normal size
  \right|_{#2} % this is the delimiter
  }}

\newcommand{\setst}[2]{\{#1\ \mid \ #2 \}}

\newcommand{\key}[1]{\emph{#1}}

%%%%%%%%%

\begin{document}
\title{Decomposing moduli of representations of finite-dimensional algebras}
\author{Calin Chindris}
\address{University of Missouri-Columbia, Mathematics Department, Columbia, MO, USA}
\email[Calin Chindris]{chindrisc@missouri.edu}

\author{Ryan Kinser}
\address{University of Iowa, Department of Mathematics, Iowa City, IA, USA}
\email[Ryan Kinser]{ryan-kinser@uiowa.edu}

\date{\today}
\bibliographystyle{amsalpha}
\subjclass[2010]{16G20, 14L24, 14D20}
\keywords{moduli spaces, representations of algebras, quivers}

\begin{abstract}
Consider a finite-dimensional algebra $A$ and any of its moduli spaces $\mathcal{M}(A,\mathbf{d})^{ss}_{\theta}$ of representations. We prove a decomposition theorem which relates any irreducible component of $\mathcal{M}(A,\mathbf{d})^{ss}_{\theta}$ to a product of simpler moduli spaces via a finite and birational map. Furthermore, this morphism is an isomorphism when the irreducible component is normal. As an application, we show that the irreducible components of all moduli spaces associated to tame (or even Schur-tame) algebras are rational varieties.
\end{abstract}

\maketitle
\setcounter{tocdepth}{1}
\tableofcontents

\section{Introduction}
\subsection{Context and motivation}
Throughout the paper, $K$ denotes an algebraically closed field of characteristic zero, and all algebras $A$ are assumed to be associative and finite-dimensional over $K$.  Since we are interested in collections of representations or modules over such an algebra, there is no loss of generality in assuming $A$ is basic and taking $A= KQ/I$ for some quiver $Q$ and admissible ideal $I$. By a slight abuse of terminology, we say ``representations of $A$'' to mean ``representations of $Q$ satisfying a set of admissible relations generating $I$''.

In this paper, we study representations of algebras within the general framework of Geometric Invariant Theory (GIT).  
This is motivated by the fact that, for many algebras, their representations cannot be classified in any convenient list form.  More precisely, the theory of $A$-modules is known to be undecidable for many algebras (it is conjectured to be undecidable for all wild algebras \cite[Ch.~17]{prest}).
In the geometric approach, algebraic varieties known as \emph{moduli spaces} are constructed to parametrize families of representations, and we study the structure of these varieties.  Besides their role in the representation theory of algebras, these moduli spaces and their framed versions also naturally arise in other mathematical contexts, such as moduli of sheaves and vector bundles \cite{Bondal1,Bondal2,ACK1}, toric varieties \cite{AltHil,Hil2,CraSmi}, Marsden-Weinstein (or symplectic) quotients \cite{CB-MW,CB-MW2,K}, and quantum groups \cite{Nakajima96,Reineke03,Reineke08}.  They also have connections with Donaldson-Thomas invariants, cluster varieties, and mathematical physics  \cite{Reineke11,KonSoi11,JoySon12,Mozgovoy1,ABCH,ACCEetal,cordova1,Bridgeland16}. 
%\cite{BMW,Stupariu}

We briefly recall just enough of the main ideas here for motivation, with more detailed background found in Section \ref{sec:background}.
A choice of weight $\theta$, which is nothing more than an assignment of an integer to each vertex of $Q$, determines subcategories $\rep(A)^s_\theta \subset \rep(A)^{ss}_\theta \subset \rep(A)$ of \emph{$\theta$-stable} and \emph{$\theta$-semistable} representations of $A$, respectively.  The category $\rep(A)^{ss}_\theta$ is abelian with $\rep(A)^s_\theta$ being precisely the simple objects, so that every $\theta$-semistable representation has a well-defined collection of $\theta$-stable composition factors, by the Jordan-H\"older theorem.

For a fixed dimension vector $\dd$, the sets of $\theta$-stable and $\theta$-semistable representations form open (possibly empty) subvarieties  $\rep(A,\dd)^s_\theta \subset \rep(A,\dd)^{ss}_\theta \subset \rep(A,\dd)$. GIT gives a procedure for taking a ``quotient'' of $\rep(A,\dd)^{ss}_{\theta}$ by the base change group $\PGL(\dd)$.  The GIT quotient $\M(A,\dd)^{ss}_\theta$ parametrizes its closed orbits, which are in bijection with semi-simple objects in $\rep(A)^{ss}_\theta$ of dimension vector $\dd$.
This procedure works equally well for any $\GL(\dd)$-invariant, closed subvariety $C \subseteq \rep(A,\dd)$, giving us a geometric method for studying ``families of representations'' of $A$.  Our main result (Theorem \ref{main-thm}) makes precise the sense in which families of $\theta$-semistable representations are ``controlled'' by the families of their $\theta$-stable composition factors.

Additional motivation for this paper is a program aimed at finding geometric characterizations of the representation type of algebras.  Although arbitrary projective varieties can arise as moduli spaces of representations of algebras \cite{Hil,HZ2},  representation theoretic properties of a given algebra can impose constraints on the moduli spaces.  For example, we show in Corollary \ref{cor:tamerational} that all moduli spaces associated to Schur-tame algebras are rational varieties. This line of research has attracted a lot of attention, see for example
\cite{BleChiHui-Zim15,Bob1,Bob5, Bob4,BS1,CC15,CC6,CC9,ChiKli16,CC12, Carroll1, Domo2, GeiSch, Rie, Rie-Zwa-1, Rie-Zwa-2, SW1}.
% \cite{Weist1, Weist2}.
 
\begin{rmknonum}
In the last section, one application and some examples of our main result are illustrated for \emph{tame algebras}. Such algebras, roughly speaking, are less complex than arbitrary algebras in that almost all of their indecomposables of a given dimension can be parametrized by finitely many families of one $K$-parameter.
We offer a few observations here on moduli of representations of tame algebras.
\begin{enumerate}[(a)]%{\setlength\itemindent{0ex}}
\item There are only very few tame algebras whose representations are actually classified, for example hereditary or special biserial algebras (or algebras close to these).
This is because there are many techniques for showing that an algebra is tame without actually classifying \emph{all} of its indecomposable representations, for example degeneration of algebra structure \cite{CB95, Geiss95}, tilting techniques \cite{R3}, or vector space category methods \cite{Ringel80}.
\item Even when the indecomposable representations of a tame algebra are classified by methods of algebra and combinatorics, this does \emph{not} give a \emph{geometric} description of the associated moduli spaces of representations or their irreducible components.  While it is straightforward to see that moduli spaces of stable representations are rational projective curves, it is not known whether they are always smooth, so it is unknown whether such a moduli space is isomorphic to $\PP^1$ in general.
\item Even when a geometric description of moduli spaces of stable representations is known for an algebra, this does not entail a description of general moduli spaces of semistable representations.  
A case in point can be found in \cite{CarChiKinWey2017}, where irreducible components of moduli of representations of special biserial algebras are studied.  Indecomposable representations of special biserial algebras are some of the most thoroughly studied among tame algebras, admitting a very explicit combinatorial description.  Nevertheless, to determine the isomorphism class of arbitrary moduli of semistable representations, we still need to apply the main result of the present work in conjunction with connections we establish between representation varieties and affine Schubert varieties (via work of Lusztig \cite{Lusztig}).
\end{enumerate}
Finally, we stress that while tame algebras are addressed in one example and one corollary, our primary interest and main result of this work is in the generality of arbitrary finite-dimensional associative algebras.
\end{rmknonum}

\subsection{Statement of main result}
Let $A$ be an arbitrary finite-dimensional algebra and $C \subseteq \rep(A, \dd)$ a $\GL(\dd)$-invariant, irreducible, closed subvariety.  A $\theta$-stable decomposition $C = m_1 C_1 \pp \cdots \pp m_r C_r$ records a collection of $\theta$-stable irreducible components $C_i \subseteq \rep(A, \dd_i)$ which parametrize the $\theta$-stable composition factors appearing in general $\theta$-semistable representations in $C$ along with multiplicities $m_i$ (see Definition \ref{def:thetastable}).  

Our main result is the following decomposition theorem. It describes each irreducible component of a moduli of representations in terms of the moduli spaces of the components of a $\theta$-stable decomposition and their multiplicities.

\begin{theorem} \label{main-thm} Let $A$ be a finite-dimensional algebra and let $C \subseteq \rep(A,\dd)^{ss}_{\theta}$ be a $\GL(\dd)$-invariant, irreducible, closed subvariety.  Let $C=m_1C_1\pp \ldots \pp m_r C_r$ be a $\theta$-stable decomposition of $C$ where $C_i \subseteq \rep(A,\dd_i)$, $1 \leq i \leq r$, are pairwise distinct $\theta$-stable irreducible components, and define $\widetilde{C} = \overline{C_1^{\oplus m_1} \oplus \cdots \oplus C_r^{\oplus m_r}}$.
\begin{enumerate}[(a)]
\item 
If $\M(C)^{ss}_{\theta}$ is an irreducible component of $\M(A,\dd)^{ss}_{\theta}$, then
$$\M(\widetilde{C})^{ss}_{\theta}=\M(C)^{ss}_{\theta}.$$
\item If $C_1$ is an orbit closure, then $$\M(\overline{C_1^{\oplus m_1} \oplus \cdots \oplus C_r^{\oplus m_r}})^{ss}_{\theta} \simeq \M(\overline{C_2^{\oplus m_2} \oplus \cdots \oplus C_r^{\oplus m_r}})^{ss}_{\theta}.$$ 
\item Assume now that none of the $C_i$ are orbit closures.
Then there is a natural morphism
\[
\Psi\colon  S^{m_1}(\mathcal{M}(C_1)^{ss}_{\theta}) \times \ldots \times S^{m_r}(\mathcal{M}(C_r)^{ss}_{\theta})  \to \M(\widetilde{C})^{ss}_{\theta}
\]
which is finite, and birational. In particular, if $\M(\widetilde{C})^{ss}_{\theta}$ is normal then $\Psi$ is an isomorphism. 
\end{enumerate}
\end{theorem}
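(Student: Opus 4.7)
The plan is to treat (a) and (b) as preparatory observations about the GIT quotient map $\pi\colon \rep(A,\dd)^{ss}_\theta \to \M(A,\dd)^{ss}_\theta$, and then approach (c) using direct-sum morphisms and symmetric-group quotients.

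For (a), both $\M(C)^{ss}_\theta$ and $\M(\widetilde{C})^{ss}_\theta$ are closed irreducible subvarieties of $\M(A,\dd)^{ss}_\theta$, being images under $\pi$ of closed $\GL(\dd)$-invariant irreducible sets. The definition of $\theta$-stable decomposition guarantees that for a general $M \in C$ the $\theta$-stable composition factors are given by general points $M_i^{(j)} \in C_i$ with $j = 1, \ldots, m_i$. The image $\pi(M)$ depends only on the associated graded $\bigoplus_{i,j} M_i^{(j)}$, and this polystable representative lies in $\widetilde{C}$ by construction; hence the images of $C$ and $\widetilde{C}$ agree on a dense open set and coincide as closed irreducible subvarieties. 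For (b), the orbit-closure hypothesis makes $\M(C_1)^{ss}_\theta$ a single point $[M_1]$, and direct sum with $M_1^{\oplus m_1}$ yields a $\GL$-equivariant closed immersion $\overline{C_2^{\oplus m_2} \oplus \cdots \oplus C_r^{\oplus m_r}} \hookrightarrow \widetilde{C}$ inducing a bijection on polystable orbits, which promotes to the stated isomorphism of moduli.

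For (c), define $\Psi$ starting from the direct sum morphism
$$\sigma\colon C_1^{m_1} \times \cdots \times C_r^{m_r} \to \widetilde{C}, \qquad (M_i^{(j)}) \longmapsto \bigoplus_{i,j} M_i^{(j)},$$
which is equivariant for the block-diagonal inclusion $G := \prod_i \GL(\dd_i)^{m_i} \hookrightarrow \GL(\dd)$ and preserves $\theta$-semistability. Taking GIT quotients by $G$ identifies the quotient of the domain with $\prod_i (\M(C_i)^{ss}_\theta)^{m_i}$. The permutation actions of the $S_{m_i}$ commute with $\GL(\dd_i)^{m_i}$, and $\sigma$ factors through them since permuting direct summands yields isomorphic representations; passing to the $S_{m_i}$-quotients gives $\Psi$. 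For birationality, the assumption that no $C_i$ is an orbit closure forces each $\M(C_i)^{ss}_\theta$ to be positive-dimensional, so a general $m_i$-tuple in $(\M(C_i)^{ss}_\theta)^{m_i}$ consists of pairwise distinct $\theta$-stable classes, and pairwise distinctness of the components $C_i$ ensures distinctness across blocks. Hence the generic polystable class in $\M(\widetilde{C})^{ss}_\theta$ has a unique preimage under $\Psi$.

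For finiteness I would argue quasi-finiteness and properness separately. Quasi-finiteness is combinatorial: the preimage of a polystable class $[N] = [\bigoplus_j S_j^{\oplus n_j}]$, with the $S_j$ pairwise non-isomorphic $\theta$-stable, corresponds to the finitely many ways of partitioning the multiset $\{S_j^{\oplus n_j}\}$ into $r$ submultisets with total dimension vectors and multiplicities matching those prescribed by $C_1^{m_1}, \ldots, C_r^{m_r}$. Properness is inherited from the projective structure of each moduli space over its affine $\theta = 0$ GIT quotient, together with compatibility of $\Psi$ with the induced morphism on those affine quotients. The isomorphism statement in the normal case then follows from Zariski's main theorem. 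The main obstacle I anticipate is establishing properness of $\Psi$ cleanly: one must set up a commutative diagram of $\theta$-semistable and affine $\theta = 0$ quotients and verify that $\Psi$ lies over a morphism of affine schemes in a way that preserves the relative projective structure. A secondary delicate point is checking that the symmetric-product and GIT-quotient constructions commute scheme-theoretically, which rests on linearity and reductivity in characteristic zero.
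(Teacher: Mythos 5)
Your outline for (a) matches the paper's proof, and your construction of $\Psi$ and the birationality idea in (c) are essentially the paper's approach, though you should be aware the paper is more careful about overlapping components: it replaces each $C_i$ by an open dense $C_i^\circ$ obtained by removing the other $C_{i'}$ with $\dd_{i'}=\dd_i$, and then uses Hom-vanishing between stables in distinct $C_i^\circ$'s to force any isomorphism of polystables to respect the block structure. ``Pairwise distinctness of the $C_i$'' alone does not immediately rule out a stable factor migrating between two components of the same dimension vector, and the paper's Example \ref{ex:notiso} is designed to illustrate exactly this subtlety.

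The genuine gap is in (b). You write that the direct-sum-with-$M_1^{\oplus m_1}$ map induces a bijection on polystable orbits ``which promotes to the stated isomorphism of moduli.'' This does not follow. The direct-sum morphism is equivariant only for the embedded subgroup $\GL(\dd-m_1\dd_1)\hookrightarrow\GL(\dd)$, not for $\GL(\dd)$, so taking GIT quotients is not a formal operation; and even granting a well-defined morphism of moduli, a bijective morphism of varieties is not an isomorphism without normality (which is not hypothesized in (b)). The paper's Lemma \ref{lemma-reduction-Bob} instead works directly at the level of semi-invariants: injectivity of $\varphi^*$ on each $\SI(C)_{m\theta}$ is easy (density of the $\GL(\dd)$-saturation of $\varphi(C_1)$), but surjectivity is the real content. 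That surjectivity is established via the extension of the First Fundamental Theorem to possibly non-acyclic quivers (Proposition \ref{FFT-thm}), passing through Schofield's double quiver $\widehat{Q}$, which produces explicit determinantal semi-invariants $c^{\widehat{V}}$ spanning the relevant weight spaces and satisfying $\varphi^*_{\sigma}(c^{\widehat{V}}/c^{\widehat V}(\widehat{M_2})) = \res_{C_1}(c^{\widehat V})$. This is a substantial technical ingredient (an entire section of the paper) that your argument skips.

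For the finiteness of $\Psi$ in (c), you take a genuinely different route than the paper: quasi-finiteness by counting decompositions of a polystable class, plus properness, plus Zariski's main theorem. That route is valid, and you are more worried than you need to be about properness: since $A$ is finite-dimensional, the zero representation lies in the closure of every $\GL(\dd)$-orbit, so $K[\rep(A,\dd_i)]^{\GL(\dd_i)} = K$ and every moduli space in sight is projective over $\Spec K$; hence $\Psi$, being a morphism of projective varieties, is automatically proper. The paper instead passes to the affine cones $C'//G'_\theta \to C//G_\theta$ and applies a graded-ring lemma (Lemma \ref{finite-map-lemma}) after verifying $\psi^{-1}(\mathbf 0)=\{\mathbf 0\}$ via one-parameter subgroups; that verification is essentially a hands-on proof of properness packaged with quasi-finiteness. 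Your version is cleaner once the projectivity observation is made explicit; the paper's is self-contained at the level of coordinate rings.
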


To analyze moduli spaces of a given algebra, we typically proceed as follows: by (a), we may assume that a general point of $C$ is simply the direct sum of its $\theta$-stable composition factors.  Then repeatedly applying (b) allows us to get rid of all the orbit closures that occur in a $\theta$-stable decomposition. These are very useful reductions since it can be easier to check the normality condition in (c) under these much more restrictive conditions on $C$. In (c) we see that multiplicities in the $\theta$-stable decomposition simply contribute symmetric powers to the overall moduli space, at least up to birational equivalence; here, recall that the \key{$m^{th}$ symmetric power} $S^m(X)$ of a variety $X$ is the quotient of $\prod_{i=1}^m X$ by the action of the symmetric group on $m$ elements which permutes the coordinates.   We also show in Example \ref{ex:notiso} that $\Psi$ is not an isomorphism in general.

\subsection{Relation to existing literature}
Here we briefly survey the relation between our results and existing literature.
The notion of $\theta$-stable decomposition was introduced by Derksen and Weyman \cite{DW2} for the case that $A=KQ$ where $Q$ is acyclic (so that all $\rep(A, \dd)$ are just vector spaces).  An extension to $\GL(\dd)$-invariant irreducible subvarieties $C \subseteq \rep(A,\dd)$ when $A$ is an arbitrary algebra was given in \cite{CC9, CC10, CC15}.

Theorem \ref{main-thm}(b) and the ingredients going into it are inspired by Bobi{\'n}ski's work \cite{Bob5} which assumes that $A=KQ/I$ where $Q$ is acyclic. To deal with the general case, we bring into the picture Schofield's double quiver $\widehat{Q}$ of $Q$, which is acyclic by construction. In Proposition \ref{FFT-thm}, we prove an extension of the First Fundamental Theorem for semi-invariants of quivers, explaining how Schofield determinantal semi-invariants of $\widehat{Q}$ relate to those of $A$. This plays a key role in the proof of Theorem \ref{main-thm}(b). The overall proof of this part of our main result builds on the work of  Bobi{\'n}ski  in \textit{ibid.},  Derksen-Weyman \cite{DW1}, Igusa-Orr-Todorov-Weyman \cite{IOTW}, Domokos \cite{Domo2}, and Schofield-van den Bergh \cite{SVB}. 

Theorem \ref{main-thm}(c) is a generalization to arbitrary algebras of a theorem of Derksen and Weyman, who proved it in the case of $A=KQ$ with $Q$ acyclic  \cite[Theorem 3.16]{DW2}. Intermediate improvements of the Derksen-Weyman result can be found in \cite[Theorem 1.4]{CC10} and \cite[Proposition 7]{CC15}. All these earlier generalizations assume more restrictive normality conditions, and exclude the possibility of more than one $\theta$-stable irreducible component of the same dimension vector (i.e. assume $\dd_i \neq \dd_j$ for $i\neq j$ in the statement of the theorem). Being able to get rid of this ``separation'' condition is especially important for applications. A first example of this can be found in Example \ref{ex:families}. Generalizing this in \cite{CarChiKinWey2017}, we use Theorem \ref{main-thm} in an essential way to show that the irreducible components of any moduli space associated to arbitrary special biserial algebras are products of projective spaces. 

In Corollary \ref{cor:tamerational},  we show that moduli spaces of ``Schur-tame'' or ``brick-tame'' algebras (a class which includes all tame algebras but also many wild algebras) are always rational varieties. This continues a long line of results on birational classification of moduli spaces of representations; see for example work of Ringel \cite{R4} or Schofield \cite{Schofield01} on the case of $A=KQ$.  

We also note that a decomposition theorem due to Crawley-Boevey \cite{CB-MW}  for symplectic reductions (or Marsden-Weinstein reductions), in the setting where $A=KQ$, is similar in form to our main result specialized to that case. He also showed that these varieties are always normal \cite{CB-MW2}. A connection between symplectic reductions and moduli spaces is discussed in \cite[\S6]{K}.

\subsection*{Acknowledgements}  
We wish to thank Grzegorz Bobi\'nski and Alastair King for discussions that led to improvements of our paper.  We are especially thankful to Harm Derksen for clarifying discussions on some of the results in \cite{DW2}. The first author (C.C.) was supported by the NSA under grant H98230-15-1-0022. 

\section{Background}\label{sec:background}
\subsection{Representation varieties} Due to a fundamental observation of Gabriel, the category of modules over any finite-dimensional unital, associative $K$-algebra $A$ is equivalent to the category of modules over a quotient of the path algebra of a finite quiver.  More precsiely, there exist a quiver $Q$ (uniquely determined by $A$) and an ideal $I$ of $KQ$ generated by a collection $\mathcal{R}$ of linear combinations of paths of length at least 2, such that $A$ is Morita equivalent to $KQ/I$.  Therefore, we always implicitly identify algebras $A$ with quotients of path algebras throughout, and representations of $A$ with representations of the corresponding quiver which satisfy the relations in $\mathcal{R}$.  More background on representations of algebras and quivers can be found in \cite{AS-SI-SK,Schiffler:2014aa}.

To fix notation, we write $Q_0$ for the set of vertices of a quiver $Q$, and $Q_1$ for its set of arrows, while $ta$ and $ha$ denote the tail and head of an arrow $ta \xrightarrow{a} ha$. A \key{representation} $M$ of $Q$ of dimension vector $\dd \in \ZZ^{Q_0}_{\geq 0}$ assigns a $\dd(x)$-dimensional vector space $M(x)$ to each $x\in Q_0$, and to each $a \in Q_1$ a choice of linear map $M(a) \colon M(ta) \to M(ha)$.  The \key{Euler-Ringel bilinear form} of $Q$ on the space $\ZZ^{Q_0}\times \ZZ^{Q_0}$ is denoted by 
\[
\langle\dd, \dd'\rangle_Q=\sum_{x \in Q_0} \dd(x)\dd'(x) - \sum_{a \in Q_1} \dd(ta)\dd'(ha).
\]

For a dimension vector $\dd$, the affine \emph{representation variety} $\rep(A,\dd)$ parametrizes the $\dd$-dimensional representations of $(Q,\R)$ along with a fixed basis. So we have:
$$
\rep(A,\dd):=\{M \in \prod_{a \in Q_1} \Mat_{\dd(ha)\times \dd(ta)}(K) \mid M(r)=0, \text{~for all~} r \in
\R \}.
$$
Under the action of the change of base group $\GL(\dd):=\prod_{x\in Q_0}\GL(\dd(x),K)$, the orbits in $\rep(A,\dd)$ are in one-to-one correspondence with the isomorphism classes of $\dd$-dimensional representations of $(Q, \R)$. For more background on module and representation varieties, see surveys such as \cite{Bon5,Zwarasurvey,HZsurvey}.
We remark once and for all that we only work at the level of varieties in this paper, ignoring reducedness and other scheme-theoretic concerns throughout.

In general, $\rep(A, \dd)$ does not have to be irreducible. Let $C$ be an irreducible component of $\rep(A, \dd)$. We say that $C$ is \emph{indecomposable} if $C$ has a nonempty open subset of indecomposable representations. We say that $C$ is a \emph{Schur component} if $C$ contains a Schur representation, in which case $C$ has a nonempty open subset of Schur representations; in particular, any Schur component is indecomposable.

Given a collection of subvarieties $\{C_i \subseteq \rep(A, \dd_i)\}_{i=1}^r$, let $\dd = \sum_i \dd_i$, so we have the subvariety $\prod_i C_i \subseteq \rep(A, \dd)$.  We define their direct sum $\overline{C_1 \oplus \ldots \oplus C_r}$ to be the closure of $\GL(\dd) \cdot \prod_i C_i$.
It was shown by de la Pena in \cite{delaP} and Crawley-Boevey and Schr{\"o}er in \cite[Theorem 1.1]{C-BS} that any irreducible component $C \subseteq \rep(A,\dd)$ satisfies a Krull-Schmidt type decomposition
$$
C=\overline{C_1 \oplus \ldots \oplus C_r}
$$
for some indecomposable irreducible components $C_i \subseteq \rep(A,\dd_i)$ with $\sum \dd_i=\dd$. Moreover, $C_1, \ldots, C_r$ are uniquely determined by this property. 

\subsection{Semi-invariants}
The first ingredient to constructing moduli spaces of quiver representations are spaces of semi-invariants, which we review here. For each rational character $\chi\colon \GL(\dd) \to K^*,$ the vector space
$$
\SI(A,\dd)_{\chi}=\lbrace f \in K[\rep(A,\dd)] \mid g \cdot f= \chi(g)f \text{~for all~}g \in \GL(\dd)\rbrace
$$
is called the \key{space of semi-invariants} on $\rep(A,\dd)$ of \key{weight} $\chi$. 
For a $\GL(\dd)$-invariant closed subvariety $C \subseteq \rep(A,\dd)$, we similarly define the space $\SI(C)_{\chi}$ of semi-invariants of weight $\chi \in X^{\star}(\GL(\dd))$. 

Note that any $\theta \in \ZZ^{Q_0}$ defines a rational character $\chi_{\theta}\colon\GL(\dd) \to K^*$ by 
\begin{equation}\label{eq:chitheta}
\chi_{\theta}((g(x))_{x \in Q_0})=\prod_{x \in Q_0}\det g(x)^{\theta(x)}.
\end{equation}
In this way, we get a natural epimorphism $\ZZ^{Q_0} \to X^{\star}(\GL(\dd))$, and we refer to either $\theta$ or $\chi_{\theta}$ as an \emph{integral weight of $Q$ (or $A$)}. In case $\dd$ is a \emph{sincere} dimension vector (i.e., $\dd(x) \neq 0$ for all $x \in Q_0$), this epimorphism is an isomorphism which allows us to identify $\ZZ^{Q_0}$ with $X^{\star}(\GL(\dd))$. 
From now on, we us assume that all of our integral weights $\theta$ are so that $\chi_{\theta}$ is a non-trivial character of $\GL(\dd)$, i.e. the restriction of $\theta$ to the support of $\dd$ is not zero, and denote by $G_{\theta} \trianglelefteq \GL(\dd)$ the kernel of $\chi_{\theta}$.  Let $C$ be a $\theta$-semistable $\GL(\dd)$-invariant, irreducible, closed subvariety of $\rep(A,\dd)$. Then we have a decomposition of the invariant ring
\begin{equation}\label{eq:KCGtheta}
K[C]^{G_{\theta}}=\bigoplus_{m \geq 0} \SI(C)_{m \theta}.
\end{equation}

\subsection{Moduli spaces of representations}\label{moduli-sec}
Fix an integral weight $\theta \in \ZZ^{Q_0}$ of $A$; we define its evaluation on dimension vectors $\dd$ by $\theta(\dd) = \sum_{x \in Q_0} \theta(x)\dd(x)$.  

Following King \cite{K}, a representation $M$ of $A$ is said to be \emph{$\theta$-semistable} if $\theta(\ddim M)=0$ and $\theta(\ddim M')\leq 0$ for all subrepresentations $M' \leq M$. We say that $M$ is \emph{$\theta$-stable} if $M$ is nonzero, $\theta(\ddim M)=0$, and $\theta(\ddim M')<0$ for all subrepresentations $0 \neq M' < M$. Finally, we call $M$ a \key{$\theta$-polystable} representation if $M$ is a direct sum of $\theta$-stable representations.  
It was noted by King that the collection of $\theta$-semistable representations of $A$ forms a full abelian subcategory of $A$ in which the $\theta$-stable representations are precisely the simple objects; in particular, Hom spaces between $\theta$-stable representations have dimension one or zero.  Two $\theta$-semistable representations are said to be \key{$S$-equivalent} if they have the same collection of $\theta$-stable composition factors (counted with multiplicity).

Now, let $\dd$ be a dimension vector of $A$ and consider the (possibly empty) open subsets
$$\rep(A,\dd)^{ss}_{\theta}=\{M \in \rep(A,\dd)\mid M \text{~is~}
\text{$\theta$-semistable}\}$$
$$\rep(A,\dd)^s_{\theta}=\{M \in \rep(A,\dd)\mid M \text{~is~}
\text{$\theta$-stable}\}.$$
We say that $\dd$ is a \emph{$\theta$-(semi-)stable dimension vector} of $A$ if $\rep(A,\dd)^{(s)s}_{\theta} \neq \emptyset$. 
A GIT quotient of $\rep(A,\dd)^{ss}_{\theta}$ by the action of $\PGL(\dd)$ is constructed by King in \cite{K}, where $\PGL(\dd)=\GL(\dd)/K^*$ with $K^*$ identified with scalar multiples of the identity of the group $\GL(\dd)$. 
This quotient is defined as
$$
\M(A,\dd)^{ss}_{\theta}:=\Proj\left(\bigoplus_{m \geq 0}\SI(A,\dd)_{m\theta}\right);
$$
it is a projective variety which is a coarse moduli space for $\theta$-semistable representations of dimension vector $\dd$, up to $S$-equivalence. Equivalently, it parametrizes $\theta$-polystable representations of dimension vector $\dd$, or more geometrically,  the $\GL(\dd)$-orbits in $\rep(A,\dd)$ which are closed in $\rep(A,\dd)^{ss}_{\theta}$. Moreover, there is a (possibly empty) open subset $\M(A,\dd)^s_{\theta}$ of $\M(A,\dd)^{ss}_{\theta}$ which is a geometric quotient of $\rep(A,\dd)^s_{\theta}$ by $\PGL(\dd)$.

For a given $\GL(\dd)$-invariant closed subvariety $C$ of $\rep(A,\dd)$, we similarly define $C^{ss}_{\theta}, C^s_{\theta}$, $\M(C)^{ss}_{\theta}$, and $\M(C)^s_{\theta}$. We say that $C$ is a \emph{$\theta$-(semi)stable subvariety} if $C^{(s)s} \neq \emptyset$. 
The invariant ring $K[C]^{G_\theta}$ is by definition the homogeneous coordinate ring of $\M(C)^{ss}_\theta$.
The following two commutative diagrams summarize the relation between the various rings and spaces we consider, with justifications given below.
\begin{equation}\label{eq:invariant diagrams}
\vcenter{\hbox{\begin{tikzpicture}
\node (1) at (0,2) {$K[\rep(A,\dd)]^{G_{\theta}}$};
\node (2) at (0,0) {$K[C]^{G_{\theta}}$};
\node (3) at (4,2) {$K[\rep(A,\dd)]$};
\node (4) at (4,0) {$K[C]$};
\path[->>]
(1) edge node[left] {$\res_C^{G_{\theta}}$} (2)
(3) edge node[left] {$\res_C$} (4);
\path[right hook-latex]
(1) edge (3)
(2) edge (4);
\end{tikzpicture}}}
\qquad
\vcenter{\hbox{\begin{tikzpicture}
\node (1) at (0,2) {$\M(A,\dd)^{ss}_{\theta}$};
\node (2) at (0,0) {$\M(C)^{ss}_{\theta}$};
\node (3) at (4,2) {$\rep(A,\dd)^{ss}_{\theta}$};
\node (4) at (4,0) {$C^{ss}_{\theta}$};
\path[right hook-latex]
(2) edge node[left] {$$} (1)
(4) edge node[left] {$$} (3);
\path[->>]
(3) edge node[above] {$\pi$} (1)
(4) edge node[above] {$\restr{\pi}{C^{ss}_{\theta}}$} (2);
\end{tikzpicture}}}
\end{equation}

The map $\res_C^{G_{\theta}}$ inherits surjectivity from $\res_C$ since $G_{\theta}$ is linearly reductive in characteristic zero \cite[Cor. 2.2.9]{Harm-book-02}. Appying $\Proj$ to this surjective homomorphism of graded algebras gives rise to the closed embedding of moduli spaces in the diagram at right.

The points of $\M(C)^{ss}_{\theta}$ correspond bijectively to the (isomorphism classes of) $\theta$-polystable representations in $C$. Indeed, each fiber of $\pi\colon C^{ss}_{\theta} \to \M(C)^{ss}_{\theta}$ contains a unique $\GL(\dd)$-orbit which is closed in $C^{ss}_{\theta}$. On the other hand, as proved by King in \cite[Proposition 3.2(i)]{K}, these orbits are precisely the isomorphism classes of $\theta$-polystable representation in $C$. In fact, for any $M \in C^{ss}_{\theta}$, there exists a $1$-parameter subgroup $\lambda \colon K^* \to G_\theta$ such that $\widetilde{M}:=\lim_{t \to 0} \lambda(t)M$ exists and is the unique, up to isomorphism, polystable representation in $\overline{\GL(\dd)M} \cap C^{ss}_{\theta}$.

\subsection{$\theta$-stable decompositions}
We now introduce the notion of a $\theta$-stable decomposition, which is a slight generalization of the definition in \cite[Section 3C]{CC10}.

\begin{definition}\label{def:thetastable}
Let $C$ be a $\GL(\dd)$-invariant, irreducible, closed subvariety of $\rep(A,\dd)$, and assume $C$ is $\theta$-semistable. Consider a collection $(C_i \subseteq \rep(A,\dd_i))_i$ of $\theta$-stable irreducible components such that $C_i \neq C_j$ for $i \neq j$, along with a collection of multiplicities $(m_i \in \ZZ_{>0})_i$.
We say that $(C_i, m_i)_i$ is a \key{$\theta$-stable decomposition of $C$} if, for a general representation $M \in C^{ss}_{\theta}$, its corresponding $\theta$-polystable representation $\widetilde{M}$ is in $C_1^{\oplus m_1} \oplus \cdots \oplus C_l^{\oplus m_l} $, and write
\begin{equation}\label{eq:thetastabledef}
C=m_1C_1\pp \ldots \pp m_l C_l.
\end{equation}
\end{definition}
This follows the notation of \cite{DW2}, whose definition of $\theta$-stable decomposition agrees with ours in the case that $A=KQ$ for an acyclic quiver $Q$.

\begin{prop} Any $\GL(\dd)$-invariant, irreducible, closed subvariety $C$ of $\rep(A,\dd)$ with $C^{ss}_{\theta} \neq \emptyset$ admits a $\theta$-stable decomposition.
\end{prop}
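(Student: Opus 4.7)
The plan is to use the GIT quotient morphism $\pi\colon C^{ss}_\theta \twoheadrightarrow \M(C)^{ss}_\theta$ along with a ``direct-sum'' morphism from products of stable moduli. For each ordered tuple $\tau = (\dd_1, \ldots, \dd_r)$ of dimension vectors with $\sum_{j=1}^r \dd_j = \dd$, the block direct-sum operation on representations is a $\prod_j \GL(\dd_j)$-equivariant morphism $\prod_j \rep(A,\dd_j)^s_\theta \to \rep(A,\dd)^{ss}_\theta$ (note that a direct sum of $\theta$-stables is automatically $\theta$-semistable since each summand has $\theta$-weight zero), which descends through GIT to a morphism
$$\overline{\Sigma}_\tau \colon \prod_{j=1}^r \M(A, \dd_j)^s_\theta \to \M(A, \dd)^{ss}_\theta, \qquad ([S_j])_j \mapsto [S_1 \oplus \cdots \oplus S_r],$$
whose image is the locus of polystables whose Jordan--H\"older type (as an unordered multiset of dimension vectors of stable summands) matches the one underlying $\tau$.

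Only finitely many such multisets $\tau$ arise, and every point of $\M(C)^{ss}_\theta$ lies in the image of $\overline{\Sigma}_\tau$ for at least one $\tau$. Since $\M(C)^{ss}_\theta$ is closed in $\M(A,\dd)^{ss}_\theta$ (by the closed embedding from the excerpt), each $\overline{\Sigma}_\tau^{-1}(\M(C)^{ss}_\theta)$ is closed in $\prod_j \M(A,\dd_j)^s_\theta$. By irreducibility of $\M(C)^{ss}_\theta$, there is one tuple $\tau_0 = (\dd_1, \ldots, \dd_r)$ for which $\overline{\Sigma}_{\tau_0}^{-1}(\M(C)^{ss}_\theta)$ maps onto a Zariski-dense subset of $\M(C)^{ss}_\theta$; pick an irreducible component $Y$ of this preimage whose image in $\M(C)^{ss}_\theta$ is dense.

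For each $j$, the projection of $Y$ to $\M(A,\dd_j)^s_\theta$ is irreducible, and its preimage $W_j$ in $\rep(A,\dd_j)^s_\theta$ under the stable-moduli quotient is an irreducible, $\GL(\dd_j)$-invariant, locally closed subvariety (quotient fibers are single connected orbits). Its closure $\overline{W_j}$ in $\rep(A,\dd_j)$ lies in a unique irreducible component $C^{(j)}$, which is $\theta$-stable since it contains $\theta$-stable points. Regroup: let $(C_i)_{i=1}^l$ be the distinct elements of the list $(C^{(1)}, \ldots, C^{(r)})$ and set $m_i = |\{j : C^{(j)} = C_i\}|$. For general $M \in C^{ss}_\theta$, the class $[\widetilde{M}]$ lies in the image of $Y$, so $\widetilde{M} \cong S_1 \oplus \cdots \oplus S_r$ with $S_j \in W_j \subseteq C^{(j)}$; grouping by containing component yields $\widetilde{M} \in C_1^{\oplus m_1} \oplus \cdots \oplus C_l^{\oplus m_l}$, showing $(C_i, m_i)_i$ is a $\theta$-stable decomposition of $C$.

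The main obstacle is the precise construction of $\overline{\Sigma}_\tau$ and the choice of $Y$. The morphism $\overline{\Sigma}_\tau$ is straightforward to produce from the $\prod_j \GL(\dd_j)$-equivariance of the direct-sum operation on representations. The more subtle point is that the polystable decomposition of $\widetilde{M}$ is canonical only up to permutation of summands, so neither the ordering of $\tau_0$ nor the component $Y$ is intrinsic; fortunately, any choice yields the same collection $(C_i, m_i)_i$ up to relabeling, which is all the definition requires.
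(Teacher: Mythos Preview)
Your argument is correct and takes a genuinely different route from the paper's. The paper works entirely upstairs in $C^{ss}_\theta$: for each ordered tuple $\mathbf{C}=(C_i)$ of $\theta$-stable irreducible components it defines the locus $\mathcal{F}(\mathbf{C})\subseteq C^{ss}_\theta$ of representations whose associated polystable has composition factors drawn from the $C_i$, shows directly that each $\mathcal{F}(\mathbf{C})$ is constructible (via an explicit ``filtration'' morphism from $\GL(\ff)\times\prod_i X_i\times\prod_a\Mat(\ast)$ and Chevalley's theorem), and then uses irreducibility of $C^{ss}_\theta$ to conclude that one such locus contains a dense open. Your approach instead descends to the moduli space $\M(C)^{ss}_\theta$, covers it by images of the direct-sum morphisms $\overline{\Sigma}_\tau$ from products of \emph{stable} moduli, selects a dominating irreducible component $Y$ of a preimage, and then reads off the $C^{(j)}$ by projecting $Y$ to the factors and lifting back through the geometric quotient.

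Both arguments ultimately rest on Chevalley's theorem and irreducibility, but they organize the bookkeeping differently. The paper's version is slightly more self-contained in that it never leaves the representation varieties and never invokes structural properties of $\M(A,\dd_j)^s_\theta$; it also indexes directly over tuples of irreducible components, which matches the definition verbatim. Your version is more geometric and arguably cleaner conceptually, but it leans on facts about the stable quotient (that $\rep(A,\dd_j)^s_\theta\to\M(A,\dd_j)^s_\theta$ is open with connected fibers, so that preimages of irreducible sets are irreducible). Two small inaccuracies worth tightening: the projection $\mathrm{proj}_j(Y)$ is only constructible, not locally closed in general, so $W_j$ is a priori only constructible---but this is harmless since you only use $\overline{W_j}$; and the irreducibility of $W_j$ deserves one more sentence (open map with irreducible fibers over an irreducible base).
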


\begin{proof} Given any collection of irreducible components $\mathbf{C} = (C_i \subseteq \rep(A, \dd'_i))_{i=1}^R$ such that $(C_i)^s_{\theta} \neq \emptyset$, consider the (possibly empty) locus 
\[
\mathcal{F}(\mathbf{C}):= \setst{M \in C^{ss}_{\theta}}{\text{there exist }M_i \in (C_i)^s_{\theta}\text{ with }\widetilde{M}\simeq \bigoplus_i M_i}.
\]
Since every $\theta$-semistable representation has a filtration with $\theta$-stable composition factors, the non-empty sets of the form $\mathcal{F}(\mathbf{C})$ provide a finite cover $C^{ss}_\theta$. Therefore at least one $\mathcal{F}(\mathbf{C})$ is dense in $C^{ss}_\theta$ since $C^{ss}_{\theta}$ is irreducible.

\smallskip
\noindent
\textbf{Claim:} For any collection $\mathbf{X}=(X_i \subseteq \rep(A, \ff_i))_{i=1}^R$ of $\GL(\ff_i)$-invariant constructible subsets of $\rep(A,\ff_i)$, the subset 
\begin{alignat*}{2} 
\filt(\mathbf X) &= \Biggl\{ M \in \rep(A,\ff) &&\;\Bigg|\; \pctext{3in}{$\exists$ a filtration of representations  $0=M_0<M_1<\ldots < M_R=M$  such that $M_i/M_{i-1}$ is isomorphic to a representation in $C_i$}\Biggr\} \\
\end{alignat*}
is a constructible subset of $\rep(A,\ff)$ where $\ff=\ff_1+\ldots+\ff_R$.
\smallskip

Given this claim, any of the $\mathcal{F}(\mathbf{C})$ above is constructible since it is a union over the symmetric group $S_R$ of constructible subsets of the form $\filt(\mathbf X_{\sigma})$ where $X_{\sigma}=((C_{\sigma(i)})^s_{\theta} \subseteq \rep(A, \dd'_{\sigma(i)}))_{i=1}^R$ and $\sigma \in S_R$. So, we get at least one $\mathcal{F}(\mathbf{C})$ which is both contructible and dense in $C^{ss}_{\theta}$. Therefore it must contain an open and dense subset of $C^{ss}_{\theta}$, proving the existence of a $\theta$-stable decomposition of $C$.

To prove the claim, we first note that for any $\mathbf X$, $\filt(\mathbf X)=\filt(X', X_R)$ where $X'=\filt((X_i)_{i=1}^{R-1})$. So, it comes down to proving the claim for $R=2$. This can be easily checked by considering the morphism of varieties
\begin{align*}
f:\GL(\ff)\times \rep(Q,\ff_1)\times \rep(Q,\ff_2)\times \prod_{a \in Q_1} \Mat_{\ff_1(ha)\times \ff_2(ta)}(K) &\to \rep(Q,\ff)\\
(g, M', M'', (X(a))_{a \in Q_1}) &\to g \cdot \left(\begin{matrix}
M'(a) & X(a)\\
0&M''(a)
\end{matrix}  \right)_{a \in Q_1}
\end{align*}
Then $\filt(\mathbf X)=f(\GL(\ff), X_1, X_2, \prod_{a \in Q_1} \Mat_{\ff_1(ha)\times \ff_2(ta)}(K))$ is constructible by Chevalley's theorem (see for example \cite{Hart77}).
\end{proof}

\begin{proof}[Proof of Theorem \ref{main-thm}{(a)}] 
We get $\M(C)^{ss}_\theta \subseteq \M(\widetilde{C})^{ss}_{\theta}$ since the $\theta$-stable composition factors of a general element of $C^{ss}_\theta$ lie in $\widetilde{C}^{ss}_{\theta}$. But $\M(C)^{ss}_\theta$ is assumed to be an irreducible component of $\M(A,\dd)^{ss}_{\theta}$, so it must be equal to the closed, irreducible subvariety $\M(\widetilde{C})^{ss}_{\theta}$.
\end{proof}

\section{Removing orbit closures}
In this section, we first make some technical advances necessary to work with algebras arising from quivers with oriented cycles.   We then use these to show that direct summands of $C$ which are orbit closures can be thrown out without changing the geometry of the moduli space.

\subsection{Schofield semi-invariants when $Q$ has oriented cycles}
Our goal in this section is to show that given a collection of (nonzero) spaces of semi-invariants $\SI(C_1)_{\theta}, \ldots, \SI(C_n)_{\theta}$ of common weight $\theta$, there is a common locus of representations whose associated Schofield semi-invariants span each $\SI(C_i)_{\theta}$, $1 \leq i \leq n$. This is proved by Bobi{\'n}ski in \cite[Lemma 4.6]{Bob5}, under the assumption that $Q$ is acyclic. In what follows, we explain how to adapt Bobi{\'n}ski's proof strategy to the general case. For arbitrary quivers $Q$, we need to work with the so-called Schofield's double quiver of $Q$, which is acyclic by construction. 

The \key{double quiver} of $Q$ is the bipartite quiver $\widehat{Q}$ defined as follows.  The set of vertices of $\widehat{Q}$ is $\widehat{Q}_0=Q_0\times \{0,1\}$.
For convenience, we denote the vertices in $\widehat{Q}$ corresponding to a vertex $v \in Q_0$ by $v_0$ and $v_1$. The set of arrows of $\widehat{Q}$ is 
$$\widehat{Q}_1=\{c_v:v_0 \to v_1\}_{v \in Q_0}\cup \{\hat{a}:(ta)_0 \to (ha)_1\}_{a \in Q_1}.$$ 
We have the natural embedding $\tau: \rep(Q) \to \rep(\widehat{Q})$, sending $V$ to $\widehat{V}$ defined as follows: $\widehat{V}(v_0)=\widehat{V}(v_1)=V(v)$ and $\widehat{V}(c_v)=\Id_{V(v)}$ for every $v \in Q_0$, and $\widehat{V}(\hat{a})=V(a)$ for every $a \in Q_1$.
For $\dd \in \ZZ^{Q_0}$, define $\widehat{\dd} \in \ZZ^{\widehat{Q}_0}$ by $\widehat{\dd}(v_0)=\widehat{\dd}(v_1)=\dd(v)$ for all $v \in Q_0$. If $\dd \in \ZZ^{Q_0}_{\geq 0}$ is a dimension vector of $Q$, denote by $\tau_{\dd}\colon\rep(Q,\dd) \to \rep(\widehat{Q},\widehat{\dd})$ the closed embedding induced by $\tau$.

Now, let $\widehat{\cc}$ and $\widehat{\dd}$ be two dimension vectors of $\widehat{Q}$ such that $\langle \widehat{\cc}, \widehat{\dd} \rangle_{\widehat{Q}}=0$. For any pair of representations $(\widehat{X}, \widehat{Y}) \in \rep(\widehat{Q}, \widehat{\cc})\times \rep(\widehat{Q}, \widehat{\dd})$, consider the $K$-linear map
\begin{align*}
d^{\widehat{X}}_{\widehat{Y}}\colon \bigoplus_{i \in \widehat{Q}_0}\Hom_K(\widehat{X}(i),\widehat{Y}(i)) & \to \bigoplus_{b \in \widehat{Q}_1} \Hom_K(\widehat{X}(tb), \widehat{Y}(hb)) \\
(\varphi(i))_{i \in \widehat{Q}_0} &\mapsto  (\varphi(hb)\widehat{X}(b)-\widehat{Y}(b)\varphi(tb))_{b \in \widehat{Q}_1},
\end{align*}
which can be viewed as a square matrix since $\langle \widehat{\cc}, \widehat{\dd} \rangle_{\widehat{Q}}=0$. 

Given a representation $\widehat{V} \in \rep(\widehat{Q}, \widehat{\cc})$, the regular function
\begin{equation}
c^{\widehat{V}} \colon \rep(\widehat{Q}, \widehat{\dd}) \to K, \qquad c^{\widehat{V}}(\widehat{W}) = \det(d^{\widehat{V}}_{\widehat{W}}),
\end{equation}
turns out to be a semi-invariant on $\rep(\widehat{Q}, \widehat{\dd})$ of weight $\langle \widehat{\cc}, - \rangle_{\widehat{Q}}$. It is also called a \key{Schofield determinantal semi-invariant}. 

Let $\dd \in \ZZ^{Q_0}_{\geq 0}$ and $\widehat{\cc} \in \ZZ^{\widehat{Q}_0}_{\geq 0}$ be dimension vectors of $Q$ and $\widehat{Q}$, respectively, such that $\langle \widehat{\cc}, \widehat{\dd} \rangle_{\widehat{Q}}=0$. For a representation $\widehat{V} \in \rep(\widehat{Q}, \widehat{c})$, define
$$
\res_{A,\dd}(c^{\widehat{V}})=\restr{\left( c^{\widehat{V}} \circ \tau_{\dd}\right)}{\rep(A,\dd)} \qquad \text{and} \qquad
\res_{C}(c^{\widehat{V}})=\restr{\left( c^{\widehat{V}} \circ \tau_{\dd}\right)}{C}
$$
where $C$ is any subvariety of $\rep(A,\dd)$. 
Now, we are ready to state the following extension of the First Fundamental Theorem for semi-invariants of quivers \cite[Theorem 1]{DW1} or \cite[Theorem 2.3]{SVB}.

\begin{prop}\label{FFT-thm} Let $\theta \in \ZZ^{Q_0}$ be an integral weight and $\dd_1, \ldots, \dd_n \in \ZZ^{Q_0}_{\geq 0}$ dimension vectors with $\theta(\dd_1)=\ldots=\theta(\dd_n)=0$. Assume that $\SI(A,\dd_i)_{\theta} \neq 0$ for all $1 \leq i \leq n$.

\begin{enumerate}[(a)]
\item There exists a dimension vector $\widehat{\cc} \in \ZZ^{\widehat{Q}_0}_{\geq 0}$ such that $\langle \widehat{\cc}, \widehat{\dd}_i \rangle_{\widehat{Q}}=0$, $\forall 1 \leq i \leq n$, and
$$
\mathsf{span}_K\{ \res_{A,\dd_i}(c^{\widehat{V}}) \mid \widehat{V} \in \rep(\widehat{Q}, \widehat{\cc}) \}=\SI(A,\dd_i)_{\theta}, \forall 1 \leq i \leq n.
$$

\item Let $C_i \subseteq \rep(A,\dd_i)$ be irreducible $\GL(\dd_i)$-invariant subvarieties ($1 \leq i \leq n$) such that $\SI(C_i)_{\theta}\neq 0$, $\forall 1 \leq i \leq n$. Let $\widehat{\cc}$ be a dimension vector as in (a). Then there exists a nonempty open subset $\mathcal U \subseteq \rep(\widehat{Q}, \widehat{\cc})$ such that
$$
\mathsf{span}_K\{ \res_{C_i}(c^{\widehat{V}}) \mid \widehat{V} \in \mathcal U\}=\SI(C_i)_{\theta}, \forall 1 \leq i \leq n,
$$
where all the semi-invariants in the spanning sets above are nonzero.
\end{enumerate}
\end{prop}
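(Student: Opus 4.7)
The overall strategy is to reduce to the classical First Fundamental Theorem for semi-invariants on the acyclic double quiver $\widehat{Q}$ (Derksen-Weyman \cite{DW1}, Schofield-Van den Bergh \cite{SVB}), transporting the resulting spanning statements down to $\rep(A,\dd_i)$ through the closed embedding $\tau_{\dd_i}\colon \rep(A,\dd_i)\hookrightarrow \rep(\widehat{Q},\widehat{\dd}_i)$, which intertwines the diagonal group inclusion $\GL(\dd_i)\hookrightarrow \GL(\widehat{\dd}_i)$.

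For part (a), I would begin by fixing a weight $\widehat{\theta}\in \ZZ^{\widehat{Q}_0}$ on $\widehat{Q}$ whose pullback under the diagonal inclusion equals $\theta$; a character computation shows this amounts to $\widehat{\theta}(v_0)+\widehat{\theta}(v_1)=\theta(v)$ at each vertex $v$. A direct computation with the Euler form of $\widehat{Q}$ then verifies that any $\widehat{\cc}$ realizing $\langle \widehat{\cc},-\rangle_{\widehat{Q}}=\widehat{\theta}$ automatically satisfies $\langle \widehat{\cc},\widehat{\dd}_i\rangle_{\widehat{Q}}=\theta(\dd_i)=0$. The central task is then to show that the pullback $\tau_{\dd_i}^{\ast}\colon \SI(\widehat{Q},\widehat{\dd}_i)_{\widehat{\theta}}\twoheadrightarrow \SI(A,\dd_i)_\theta$ is surjective. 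I would carry this out in two sub-steps: first, lift semi-invariants from $\rep(A,\dd_i)$ to $\rep(Q,\dd_i)$ using linear reductivity of $\GL(\dd_i)$ applied to the $\GL(\dd_i)$-invariant closed embedding $\rep(A,\dd_i)\subseteq \rep(Q,\dd_i)$; second, extend further to $\rep(\widehat{Q},\widehat{\dd}_i)$ by exploiting the fact that the $\GL(\widehat{\dd}_i)$-orbit of $\tau_{\dd_i}(\rep(Q,\dd_i))$ is precisely the open subset where all $c_v$ are invertible, so that $\tau_{\dd_i}(\rep(Q,\dd_i))$ is a slice and $\GL(\widehat{\dd}_i)$-semi-invariants on this open identify (after absorbing $\det c_v$ twists into $\widehat{\theta}$) with $\GL(\dd_i)$-semi-invariants on $\rep(Q,\dd_i)$. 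Once surjectivity is established, the acyclic FFT applied to $\widehat{Q}$ gives spanning of $\SI(\widehat{Q},\widehat{\dd}_i)_{\widehat{\theta}}$ by Schofield determinantal semi-invariants $c^{\widehat{V}}$ for $\widehat{V}\in \rep(\widehat{Q},\widehat{\cc}_i)$. To get a single $\widehat{\cc}$ uniform in $i$, I would enlarge to a common dimension vector of the correct weight, using multiplicativity $c^{\widehat{V}_1\oplus \widehat{V}_2}=c^{\widehat{V}_1}\cdot c^{\widehat{V}_2}$ together with the standard acyclic fact that spanning is preserved under adding weight-zero summands. The main obstacle is the second sub-step of surjectivity, where precise tracking of $\det c_v$ weight twists under the slice identification is the technical heart.

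For part (b), a density argument suffices. Combining (a) with the surjection $\SI(A,\dd_i)_\theta \twoheadrightarrow \SI(C_i)_\theta$ (by linear reductivity, since $C_i\subseteq \rep(A,\dd_i)$ is $\GL(\dd_i)$-invariant and closed), the collection $\{\res_{C_i}(c^{\widehat{V}}) \mid \widehat{V}\in \rep(\widehat{Q},\widehat{\cc})\}$ spans the finite-dimensional space $\SI(C_i)_\theta$ for each $i$. Since $\widehat{V}\mapsto \res_{C_i}(c^{\widehat{V}})$ is polynomial into $\SI(C_i)_\theta$, any non-empty open $\mathcal U\subseteq \rep(\widehat{Q},\widehat{\cc})$ still yields spanning: if not, a non-zero linear functional vanishing on the restricted span would produce a non-zero polynomial on $\rep(\widehat{Q},\widehat{\cc})$ vanishing on the Zariski-dense subset $\mathcal U$, a contradiction. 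To additionally ensure that each $\res_{C_i}(c^{\widehat{V}})$ is non-zero on $\mathcal U$, I would take $\mathcal U=\bigcap_{i=1}^n \{\widehat{V}\mid \res_{C_i}(c^{\widehat{V}})\neq 0\}$; each factor is open, and non-empty precisely because $\SI(C_i)_\theta\neq 0$ is spanned by these Schofield restrictions.
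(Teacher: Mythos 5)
Your proposal for part (b) is correct and in fact cleaner than the paper's argument: the observation that $\widehat{V}\mapsto \res_{C_i}(c^{\widehat{V}})$ is a polynomial map into the finite-dimensional space $\SI(C_i)_\theta$, combined with density and irreducibility of $\rep(\widehat{Q},\widehat{\cc})$, directly gives spanning over any nonempty open set; your choice of $\mathcal U$ as the intersection of nonvanishing loci then matches the paper's $\mathcal U=\bigcap_i\mathcal U_i$, while the paper's more hands-on construction of a basis via a nonsingular test matrix is avoided.

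Part (a) is where your argument has real gaps. First, you propose to fix a lift $\widehat{\theta}$ with $\widehat{\theta}(v_0)+\widehat{\theta}(v_1)=\theta(v)$ and then find $\widehat{\cc}\geq 0$ with $\langle\widehat{\cc},-\rangle_{\widehat{Q}}=\widehat{\theta}$. A generic lift $\widehat{\theta}$ admits no nonnegative $\widehat{\cc}$; the IOTW decomposition used in the paper writes $\widetilde{\theta}=\langle\widehat{\cc}-\ddim P_{\widehat{\ff}},-\rangle_{\widehat{Q}}$ with $\supp(\widehat{\cc})\cap\supp(\widehat{\ff})=\emptyset$, and the paper then argues that $\SI(\widehat{Q},\widehat{\dd}_i)_{\widetilde\theta}\neq 0$ forces $\supp(\widehat{\ff})\cap\supp(\widehat{\dd}_i)=\emptyset$ so that $\widetilde{\theta}$ and $\langle\widehat{\cc},-\rangle_{\widehat{Q}}$ agree on the relevant supports. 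This support-matching step is entirely missing from your sketch. Second, and more seriously, you acknowledge but do not resolve the central step: surjectivity of $\tau^*_{\dd_i}\colon\SI(\widehat{Q},\widehat{\dd}_i)_{\widehat{\theta}}\to\SI(A,\dd_i)_\theta$. Your ``slice'' heuristic correctly identifies the role of the open locus where the $c_v$ are invertible, but it does not explain why a semi-invariant on $\rep(Q,\dd_i)$, pushed onto that slice, extends to a \emph{polynomial} on all of $\rep(\widehat{Q},\widehat{\dd}_i)$. The paper's Proposition~\ref{Domokos-prop} is exactly the mechanism that makes this precise: decomposing $f$ as a sum of homogeneous pieces with respect to the degree at vertex $v$ and multiplying by a sufficiently high power of $\det c_v$ to clear denominators, with a bound $N^v_f$ that can be taken uniformly over a finite basis and then iterated vertex-by-vertex. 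Without that degree bookkeeping (or an equivalent argument), the surjectivity does not follow. Finally, your plan to ``enlarge to a common $\widehat{\cc}$ by adding weight-zero summands'' is problematic: the Euler form of an acyclic quiver is nondegenerate, so no nonzero dimension vector gives the zero functional, and ``weight-zero on the relevant supports'' requires exactly the kind of support analysis you have not done. In fact this step is unnecessary if, following the paper, you fix a single $\widetilde{\theta}$ uniformly for all $i$ at the outset, which is precisely what iterating Proposition~\ref{Domokos-prop} over all basis elements and all vertices achieves.
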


\begin{remark} 
We point out that one can always construct spanning sets of Schofield determinantal semi-invariants for any single given space $\SI(A,\dd)_{\theta}$ (or $\SI(C)_{\theta}$) by working entirely within the category of representations of $(Q,\R)$. However, if the algebra $A$ is not acyclic, it is not clear how to come up with the analogue of the dimension vector $\widehat{\cc}$ and locus $\mathcal U$, unless one uses the Schofield double quiver $\widehat{Q}$.
\end{remark}

To prove Proposition \ref{FFT-thm}, we need the result of Domokos below. To state it, we recall the following iterative way of building the double quiver $\widehat{Q}$ of $Q$. For an arbitrary vertex $v \in Q_0$, define the quiver $Q^{v}$ by:
\begin{itemize}
\item $Q^v_0:=(Q_0\setminus \{v\}) \cup \{v_0, v_1\}$;
\item $Q^v_1:= \{a \in Q_1 \mid a \text{~is not incident to~} v\}  \cup \{\widehat{a} \colon v_0 \to ha \mid a \in Q_1 \text{~with~} ta=v \} \cup \{\widehat{a} \colon ta \to v_1 \mid a \in Q_1 \text{~with~} ha=v\} \cup \{c_v\colon v_0 \to v_1\}$.
\end{itemize}
If $a \in Q_1$, we denote the corresponding arrow in $Q_1^v$, which is either $a$ itself or $\widehat{a}$, by $a^v$.
If $\dd \in \ZZ_{\geq 0}^{Q_0}$ is a dimension vector, define $\dd^v \in \ZZ^{Q^v_0}_{\geq 0}$ by $\dd^v(x)=\dd(x)$ for $x \in Q_0$, with $x \neq v$, and $\dd^v(v_0)=\dd^v(v_1)=\dd(v)$. We have the closed embedding $\tau^v_{\dd}\colon \rep(Q,\dd) \to \rep(Q^v,\dd^v)$ defined by $\tau^v_{\dd}(X)(a^v)=X(a)$ for all $a \in Q_1$ and $\tau^v_{\dd}(X)(c_v)=\Id_{X(v)}$.
\smallskip

If $\theta \in \ZZ^{Q_0}$ and $n \in \ZZ$, define $\theta^{v,n} \in \ZZ^{Q^{v}_0}$ by $\theta^{v,n}(i)=\theta(i)$ for all $v \neq i \in Q_0$, and $\theta^{v,n}(v_0)=\theta(v)+n$, and $\theta^{v,n}(v_1)=-n$.

\begin{prop} (compare to \cite[Proposition 2.1]{Domo2}) \label{Domokos-prop} Let $\dd$ be a dimension vector of $Q$, let $\theta \in \ZZ^{Q_0}$ an integral weight, and $v \in Q_0$. Fix an arbitrary semi-invariant $f\in \SI(Q,\dd)_{\theta}$. Then there is an integer $N^{v}_{f}>0$ such that for any integer $n \geq N^v_f$, there exists a semi-invariant $f^v_n \in \SI(Q^v,\dd^v)_{\theta^{v,n}}$ with $f=f^v_n \circ \tau^v_{\dd}.$
\end{prop}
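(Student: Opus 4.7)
The plan is to construct $f^v_n$ explicitly, following the strategy of the proof of \cite[Proposition 2.1]{Domo2}: first extend $f$ to a rational function on $\rep(Q^v,\dd^v)$ via a canonical normalization of the new arrow $c_v$, and then clear denominators by multiplying by a sufficiently large power of $\det \widetilde X(c_v)$.

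More concretely, let $U \subseteq \rep(Q^v,\dd^v)$ denote the open subset consisting of those $\widetilde X$ for which $\widetilde X(c_v)$ is invertible. On $U$, the element $h_{\widetilde X}\in \GL(\dd^v)$ defined by $h_{\widetilde X}(v_1) = \widetilde X(c_v)^{-1}$ and $h_{\widetilde X}(y) = \Id$ for $y\neq v_1$ normalizes $c_v$ to the identity, so that $h_{\widetilde X}\cdot \widetilde X \in \tau^v_\dd(\rep(Q,\dd))$. This determines a morphism $U \to \rep(Q,\dd)$, $\widetilde X \mapsto X_{\widetilde X}$, given explicitly by $X_{\widetilde X}(a) = \widetilde X(a)$ for $a$ not incident to $v$, $X_{\widetilde X}(a) = \widetilde X(\hat a)$ when $ta = v$ and $ha \neq v$, and $X_{\widetilde X}(a) = \widetilde X(c_v)^{-1}\widetilde X(\hat a)$ whenever $ha = v$ (including loops at $v$). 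The composite $F := f \circ X_{(-)}$ is a polynomial in the matrix entries of $\widetilde X$ and in $\det(\widetilde X(c_v))^{-1}$, so for $N^v_f$ chosen large enough to absorb every occurrence of $\det(\widetilde X(c_v))^{-1}$, the product $\det(\widetilde X(c_v))^{n}F$ extends to a regular function $f^v_n$ on all of $\rep(Q^v,\dd^v)$ for every $n \geq N^v_f$. When $\widetilde X = \tau^v_\dd(X)$ we have $\widetilde X(c_v) = \Id$ and $X_{\widetilde X} = X$, giving $f^v_n \circ \tau^v_\dd = f$ as required.

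It remains to verify that $f^v_n$ has weight $\theta^{v,n}$. A short calculation shows that for $h \in \GL(\dd^v)$ and $\widetilde X \in U$ one has $X_{h\cdot\widetilde X} = g\cdot X_{\widetilde X}$, where $g \in \GL(\dd)$ is defined by $g(v) = h(v_0)$ and $g(x) = h(x)$ for $x \neq v$; indeed, the composite $h_{h\cdot\widetilde X} \cdot h \cdot h_{\widetilde X}^{-1}$ acts as $h(v_0)$ on both $v_0$ and $v_1$ and agrees with $h$ elsewhere. Consequently $F$ transforms with weight $\theta(x)$ at each $x \neq v$, weight $\theta(v)$ at $v_0$, and trivially at $v_1$, while $\det(\widetilde X(c_v))^n$ transforms with weight $n$ at $v_0$ and $-n$ at $v_1$. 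Multiplying these contributions yields weight $\theta^{v,n}$ on the dense open subset $U$, and hence on all of $\rep(Q^v,\dd^v)$ by continuity. The main technical obstacle is tracking how the canonical normalization $h_{\widetilde X}$ twists the induced $\GL(\dd)$-action on $X_{\widetilde X}$, which is precisely what produces the asymmetric shift in weight between $v_0$ and $v_1$; once this bookkeeping is carried out, the remaining verifications are routine.
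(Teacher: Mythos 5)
Your proof is correct, and it takes a genuinely different (more self-contained) route than the paper. The paper's proof delegates the key construction to \cite[Proposition 2.1]{Domo2}: it equips $K[\rep(Q,\dd)]$ with the grading in which $T^a_{i,j}$ has degree $1$ exactly when $ta = v$, decomposes $f = f_1 + \cdots + f_l$ into homogeneous semi-invariants of degrees $d_i$, invokes Domokos's result to lift each $f_i$ to some $f^v_{d_i} \in \SI(Q^v,\dd^v)_{\theta^{v,d_i}}$, and finally homogenizes the weights by multiplying by $\det_{c_v}^{\,n-d_i}$ and summing. Your argument bypasses Domokos and the grading entirely: you normalize $c_v$ on the dense open locus $U$ where it is invertible, pull $f$ back to the rational function $F = f \circ X_{(-)}$ whose denominator is a power of $\det_{c_v}$, clear denominators to define $f^v_n = \det_{c_v}^n F$, and verify the weight directly by tracking how the normalizing element $h_{\widetilde{X}}$ intertwines the $\GL(\dd^v)$-action on $U$ with the $\GL(\dd)$-action on $\rep(Q,\dd)$. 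Your equivariance identity $X_{h\cdot\widetilde{X}} = g\cdot X_{\widetilde{X}}$ with $g(v)=h(v_0)$ and $g(x)=h(x)$ otherwise is exactly right, and the resulting weight bookkeeping (weight $\theta(v)$ at $v_0$, trivial at $v_1$ for $F$, plus the $(n,-n)$ contribution of $\det_{c_v}^n$) matches $\theta^{v,n}$ and is consistent with the paper's stated weight of $\det_{c_v}$. What your approach buys is a self-contained, citation-free proof that also makes the geometric content of the statement transparent (it is a rational trivialization of $c_v$); what the paper's route buys is brevity and a cleaner separation of concerns, since the subtle normalization/equivariance bookkeeping you carry out is delegated to Domokos's result, and only the elementary weight-adjustment by $\det_{c_v}^{n-d_i}$ remains to be done in-house.
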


\begin{proof} For each arrow $a \in Q_1$ and pair of indexes $(i,j) \in \{1, \ldots, \dd(ha)\}\times \{1, \ldots, \dd(ta)\}$, denote the corresponding coordinate function in $K[\rep(Q,\dd)]$ by $T^a_{i,j}$. Equip $K[\rep(Q,\dd)]$ with the grading defined by $\deg (T^a_{i,j})=1$ if $ta=v$ and $0$ if $ta \neq v$. It is clear that the action of $\GL(\dd)$ on $K[\rep(Q,\dd)]$ preserves this grading. Consequently, we can write
$$
f=f_1+\ldots+f_l,
$$
where each $f_i$ is a homogeneous semi-invariant of weight $\theta$. Let us denote the degree of $f_i$ by $d_i$,  $\forall 1 \leq i \leq l$. It is proved in \cite[Proposition 2.1]{Domo2} that, for each $1 \leq i \leq l$, there exists a semi-invariant $f^v_{d_i} \in \SI(Q^v,\dd^v)_{\theta^{v,d_i}}$ such that $f_i=f^v_{d_i} \circ \tau^v_{\dd}$.

Now, for each positive integer $m$, we have the semi-invariant $\det^m_{c_v}\colon\rep(Q^v,\dd^v) \to K$ defined by sending $X \in \rep(Q^v, \dd^v)$ to $\det(X(c_v))^m$. The weight of this semi-invariant is $m$ at $v_0$, $-m$ at $v_1$, and zero at all other vertices.

Finally, setting $N^v_f:=d_1+\ldots +d_l$, we get that for every $n \geq N^v_f$:
\begin{itemize}
\item $f^v_n:=\sum_{i=1}^l f^v_{d_i}\cdot \det_{c_v}^{n-d_i} \in \SI(Q^v,\dd^v)_{\theta^{v,n}}$; \\

\item $f^v_n \circ \tau^v_{\dd}=f$.
\end{itemize}
This finishes the proof.
\end{proof}

\begin{proof}[Proof of Proposition \ref{FFT-thm}] $(a)$ Since $A$ is finite-dimensional, we know that any weight space of semi-invariants for $A$ is finite-dimensional. For each $i  \in \{1, \ldots, n\}$, choose a $K$-basis $F^i_1, \ldots, F^i_{m_i}$ for $\SI(A,\dd_i)_{\theta}$. Furthermore, since ${\rm char} K = 0$, taking invariants preserves surjectivity of $K$-algebra homomorphisms, so we know that there exist semi-invariants $f^i_1, \ldots, f^i_{m_i}$ in $\SI(Q,\dd_i)_{\theta}$ such that $F^i_l=\restr{f^i_l} {\rep(A,\dd_i)}$ for all $1 \leq l \leq m_i$ and $1 \leq i \leq n$.

After successively applying Proposition \ref{Domokos-prop} to all $f^i_l$, and the vertices of $Q$, one vertex at a time, we get a weight $\widetilde{\theta} \in \ZZ^{\widehat{Q}_0}_{\geq 0}$ and semi-invariants $\hat{f}^i_l \in \SI(\widehat{Q}, \widehat{\dd_i})_{\widetilde{\theta}}$ such that $f^i_l=\hat{f}^i_l \circ \tau_{\dd_i}$ for all $l$ and all $i$.
We know from \cite{IOTW} (see also \cite[Theorem 2.7]{CC8}) that there exist unique dimension vectors $\widehat{\cc}, \widehat{\ff} \in \ZZ^{\widehat{Q}_0}_{\geq 0}$ such that $\supp(\widehat{\cc}) \cap \supp(\widehat{\ff})=\emptyset$ and $\widetilde{\theta}=\langle \widehat{\cc}-\ddim P_{\widehat{\ff}}, -\rangle_{\widehat{Q}}$. Since $\SI(\widehat{Q}, \widehat{\dd_i})_{\widetilde{\theta}} \neq 0$ for all $1 \leq i \leq n$, we also have that:
\begin{itemize}
\item $\supp(\widehat{\dd_i}) \cap \supp(\widehat{\ff})=\emptyset, \forall 1 \leq i \leq n$;
\item $\SI(\widehat{Q}, \widehat{\dd_i})_{\langle \widehat{\cc}, - \rangle_{\widehat{Q}}} \neq 0, \forall 1 \leq i \leq n$.
\end{itemize} 
(Recall that $P_{\widehat{f}}:=\bigoplus_{x \in \widehat{Q}_0}P_x^{\widehat{f}(x)}$, where $P_x$ is the projective indecomposable representation of $\widehat{Q}$ at vertex $x$.)

It is now easy to check that $\restr{\widetilde{\theta}}{\supp(\widehat{\dd_i})}=\restr{\langle \widehat{\cc}, - \rangle_{\widehat{Q}}}{\supp(\widehat{\dd_i})}$ and hence $$\SI(\widehat{Q}, \widehat{\dd_i})_{\widetilde{\theta}}=\SI(\widehat{Q}, \widehat{\dd_i})_{\langle \widehat{\cc}, - \rangle_{\widehat{Q}}}, \forall 1 \leq i \leq n.$$
From the First Fundamental Theorem for semi-invariants for acyclic quivers (see \cite{DW1} or \cite{SVB}), we know that each $\hat{f}^i_l$ is a linear combination of semi-invariants of the form $c^{\widehat{V}}$ with $\widehat{V} \in \rep(\widehat{Q}, \widehat{\cc})$. Hence, we get that
$$
\mathsf{span}_K\{ \res_{A,\dd_i}(c^{\widehat{V}}) \mid \widehat{V} \in \rep(\widehat{Q}, \widehat{\cc}) \}=\SI(A,\dd_i)_{\theta}, \forall 1 \leq i \leq n.
$$

\bigskip
\noindent
$(b)$  For each $1 \leq i \leq n$, we know from $(a)$ that there exist $\widehat{V} \in \rep(\widehat{Q}, \widehat{\cc})$ and $M_i \in \rep(A, \dd_i)$ such that $c^{\widehat{V}}(\widehat{M_i}) \neq 0$.
Since we assume $\SI(C_i)_{\theta}\neq 0$, and the restriction map $\SI(A, \dd_i)_{\theta} \to \SI(C_i)_{\theta}$ is surjective, we may take $M_i \in C_i$ even.
Consequently,
$$
\mathcal U_i:=\{\widehat{V} \in \rep(\widehat{Q}, \widehat{\cc}) \mid c^{\widehat{V}}(\widehat{M_i})\neq 0 \text{~for some~} M_i \in C_i \}
$$ 
is a nonempty open subset of $\rep(\widehat{Q}, \widehat{\cc})$. Then $\mathcal U:=\bigcap_{i=1}^n \mathcal U_i$
is clearly a nonempty open subset of $\rep(\widehat{Q}, \widehat{\cc})$, and $\res_{C_i}(c^{\widehat{V}}) \neq 0$ for all $\widehat{V} \in \mathcal U$ and all $i$.

Now, for each $i \in \{1, \ldots, n\}$, let us choose $\widehat{V}^i_1, \ldots, \widehat{V}^i_{m_i} \in \rep(\widehat{Q}, \widehat{\cc})$ such that $\res_{C_i}(c^{\widehat{V}^i_j})$, $1 \leq j \leq m_i$, form a $K$-basis for $\SI(C_i)_{\theta}$. Then we can choose representations $M^i_j \in C_i$, $j \in \{1, \ldots, m_i\}$, such that the matrix $\left(c^{\widehat{V}^i_k}(\widehat{M^i_l})\right)$ is nonsingular. Next, consider the regular function
\begin{align*}
\varphi_i\colon&\mathcal{U}_i^{m_i} \to K \\
(\widehat{X}^i_1, \ldots, \widehat{X}^i_{m_i})&\to \det (\left(c^{\widehat{X}^i_k}(\widehat{M^i_l})\right)),
\end{align*}
and note that $\varphi_i^{-1}(K \setminus \{0\})$ is not empty.

It is now easy to see that for any chosen tuple $(\widehat{X}^i_1, \ldots, \widehat{X}^i_{m_i}) \in \varphi_i^{-1}(K \setminus \{0\}) \cap \mathcal U_i^{m_i}$, the semi-invariants $\res_{C_i}(c^{\widehat{X}^i_1})$, $\ldots$, $\res_{C_i}(c^{\widehat{X}^i_{m_i}})$ form a $K$-basis for $\SI(C_i)_{\theta}$; in particular, this completes the proof by showing that
$$
\mathsf{span}_K \{\res_{C_i}(c^{\widehat{V}}) \mid \widehat{V} \in \mathcal U \}=\SI(C_i)_{\theta}, \forall 1 \leq i \leq n. \qedhere
$$ 
%This finishes the proof.
\end{proof}

\subsection{Removing orbit closure summands}
The following reduction result is an adaptation of \cite[Lemma 5.1]{Bob5}  to the general case where $Q$ may have oriented cycles.  With Proposition \ref{FFT-thm} at our disposal, the arguments in \textit{ibid.} carry over. Nonetheless, we include the proof below for completeness and for the convenience of the reader.

\begin{lemma}(see also \cite[Lemma 5.1]{Bob5}) \label{lemma-reduction-Bob} Let $C$ be an irreducible $\GL(\dd)$-invariant closed subvariety of $\rep(A,\dd)$ with $C^{ss}_{\theta} \neq \emptyset$. If $C=\overline{C_1 \oplus C_2}$ for irreducible $\GL(\dd_i)$-invariant closed subvarieties $C_i \subseteq \rep(A,\dd_i)$, $i \in \{1,2\}$, with $C_2$ the orbit closure of a representation $M_2$, then
$$
\M(C)^{ss}_{\theta} \simeq \M(C_1)^{ss}_{\theta}.
$$
\end{lemma}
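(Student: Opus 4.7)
The plan is to construct a morphism $\Psi\colon \M(C_1)^{ss}_\theta \to \M(C)^{ss}_\theta$ by pulling back semi-invariants along a natural embedding $\iota\colon C_1 \hookrightarrow C$, $M_1 \mapsto M_1 \oplus M_2$, and then to invoke Proposition \ref{FFT-thm}(b) to show $\iota$ induces a graded-ring isomorphism in enough degrees to force an isomorphism on $\Proj$.

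First I would fix block decompositions $K^{\dd(v)} = K^{\dd_1(v)} \oplus K^{\dd_2(v)}$ at every vertex $v$, making $\iota$ a closed embedding equivariant for the block-diagonal inclusion $\GL(\dd_1) \hookrightarrow \GL(\dd)$. Since a general point of $C^{ss}_\theta$ lies in $\GL(\dd)\cdot(C_1 \times \GL(\dd_2)M_2)$ and $\theta$-semistability descends to direct summands, $M_2$ is itself $\theta$-semistable. Pullback then defines a graded homomorphism $\iota^*\colon \SI(C)_{m\theta} \to \SI(C_1)_{m\theta}$ in every degree. Injectivity follows because $\GL(\dd_2)M_2$ is dense in $C_2$, so $\GL(\dd)\cdot\iota(C_1)$ is dense in $C$; any SI in $\ker \iota^*$ would then vanish on $\GL(\dd)\iota(C_1)$ by equivariance, hence on all of $C$.

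For surjectivity, the key observation is that, $C_2$ being an orbit closure, $\SI(C_2)_{m\theta}$ is at most one-dimensional and any nonzero element is nonvanishing at $M_2$. For any $m \geq 1$ with $\SI(C_2)_{m\theta} \neq 0$, I would apply Proposition \ref{FFT-thm}(b) to the pair $(C_1, C_2)$ at weight $m\theta$. This yields a nonempty open $\mathcal{U}^{(m)} \subseteq \rep(\widehat{Q}, \widehat{\cc}^{(m)})$ for which $\{\res_{C_1}(c^{\widehat{V}})\}_{\widehat{V} \in \mathcal{U}^{(m)}}$ spans $\SI(C_1)_{m\theta}$ while $\res_{C_2}(c^{\widehat{V}})$ is always nonzero, forcing $c^{\widehat{V}}(M_2) \neq 0$. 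Combined with the block-diagonal form of $d^{\widehat{V}}_{\widehat{M_1 \oplus M_2}}$ yielding the multiplicativity
\[
c^{\widehat{V}}(M_1 \oplus M_2) = c^{\widehat{V}}(M_1)\, c^{\widehat{V}}(M_2),
\]
this shows $\iota^*\res_C(c^{\widehat{V}}) = c^{\widehat{V}}(M_2)\cdot \res_{C_1}(c^{\widehat{V}})$ is a nonzero scalar multiple of a spanning element of $\SI(C_1)_{m\theta}$, so $\iota^*$ is surjective in degree $m$.

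To conclude, I would use that $\theta$-semistability of $M_2$ provides some $m_0 \geq 1$ with $\SI(C_2)_{m_0\theta} \neq 0$; taking $n$-th powers gives $\SI(C_2)_{nm_0\theta} \neq 0$ for every $n \geq 1$. So $\iota^*$ restricts to a graded isomorphism on the $m_0$-th Veronese subalgebras $K[C]^{G_\theta,(m_0)} \cong K[C_1]^{G_\theta,(m_0)}$, and applying $\Proj$ yields $\M(C)^{ss}_\theta \simeq \M(C_1)^{ss}_\theta$. The hardest part will be the simultaneous control of Schofield determinantal semi-invariants on $C_1$ and the orbit closure $C_2$ at each weight $m\theta$, which is exactly what Proposition \ref{FFT-thm}(b) is designed to provide---with the Schofield double quiver $\widehat{Q}$ removing the acyclicity hypothesis that restricts Bobi\'nski's original argument to the case $A=KQ/I$ with $Q$ acyclic.
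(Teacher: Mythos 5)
Your proposal is correct and follows essentially the same approach as the paper: the direct-sum embedding $C_1 \hookrightarrow C$ given by $M_1 \mapsto M_1 \oplus M_2$, injectivity of pullback from density of $\GL(\dd)\cdot(C_1 \oplus M_2)$ in $C$, surjectivity via Proposition \ref{FFT-thm}(b) and the multiplicativity $c^{\widehat{V}}(\widehat{M_1 \oplus M_2}) = c^{\widehat{V}}(\widehat{M_1})\,c^{\widehat{V}}(\widehat{M_2})$. The only cosmetic difference is that the paper clears the nonvanishing hypothesis by replacing $\theta$ with a positive multiple at the outset (so that $\SI(C)_\theta$, hence $\SI(C_1)_\theta$ and $\SI(C_2)_\theta$, are all nonzero, and the degree-wise isomorphism holds in every degree), whereas you pass to a Veronese subalgebra at the end; both yield the same conclusion after applying $\Proj$.
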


\begin{proof} Replacing $\theta$ with a positive multiple, which does not change the moduli space, we can assume without loss of generality that $\SI(C)_{\theta} \neq 0$. This implies that $\SI(C_1)_{\theta} \neq 0$ and $\SI(C_2)_{\theta} \neq 0$.
Now, let us consider the morphism
\begin{align*}
\varphi\colon&C_1 \longrightarrow C \\
&X \mapsto X\oplus M_2 .
\end{align*}
Note that for any weight $\sigma \in \ZZ^{Q_0}$, we know $\varphi^{*}(\SI(C)_{\sigma}) \subseteq \SI(C_1)_{\sigma}$ and that the restriction $\varphi^{*}_{\sigma}:=\restr{\varphi^*}{\SI(C)_{\sigma}}\colon\SI(C)_{\sigma} \to \SI(C_1)_{\sigma}$ is injective. The injectivity follows immediately from the fact that the $\GL(\dd)$-orbit of the image of $\varphi$ is dense in $C$.

Now, let $m \geq 1$ be an integer and set $\sigma:=m\theta$. We claim that $\varphi^*_{\sigma}$ is surjective as well. To prove this claim, choose an open subset $\emptyset \neq \mathcal U \subseteq \rep(\widehat{Q}, \widehat{\ee})$ as in Proposition \ref{FFT-thm}(b). Then, for any $\widehat{V} \in \mathcal U$, we have that $c^{\widehat{V}}(\widehat{M_2}) \neq 0$ and
$$
\varphi^*_{\sigma}(\res_{C}(c^{\widehat{V}}))=c^{\widehat{V}}(\widehat{M_2})\cdot \res_{C_1}(c^{\widehat{V}}).
$$
So, we get that
$$
\varphi^*_{\sigma}\left({1 \over c^{\widehat{V}}(\widehat{M_2})}\res_{C}(c^{\widehat{V}})\right)=\res_{C_1}(c^{\widehat{V}}).
$$
Using Proposition \ref{FFT-thm}{(b)}, we conclude that $\varphi^*_{\sigma}$ is surjective and hence an isomorphism. Taking the sum over all $m$, we get an isomorphism of homogeneous coordinate rings of $\M(C)^{ss}_{\theta}$ and $\M(C_1)^{ss}_{\theta}$, completing the proof.
\end{proof}

With this, we can continue the proof of our main theorem.

\begin{proof}[Proof of Theorem \ref{main-thm}(b)]
This follows from applying Lemma \ref{lemma-reduction-Bob} repeatedly.
\end{proof}

\section{The product decomposition} 
\subsection{Outline of proof completion} We simplify the notation by assuming that $$C=\widetilde{C} = \overline{C_1^{\oplus m_1} \oplus \cdots \oplus C_r^{\oplus m_r}}.$$ Now we can construct the morphism $\Psi$ in the statement of Theorem \ref{main-thm}(c). 
We have an equality $(C')^{ss}_{\theta}=(C_{1,\theta}^{ss})^{m_1}\times \cdots \times (C_{r,\theta}^{ss})^{m_r}$ 
since direct summands of a $\theta$-semistable representation are $\theta$-semistable.  The group $\prod_{i=1}^r \left( S_{m_i} \ltimes \PGL(\dd_i)^{m_i} \right)$ naturally acts on the right hand side, and the GIT-quotient by this action is precisely $S^{m_1}(\M(C_1)^{ss}_{\theta}) \times \ldots \times S^{m_r}(\M(C_r)^{ss}_{\theta})$. Furthermore, using the universal property of this quotient, we get the commutative diagram
\begin{equation}\label{eq:mainiso}
\vcenter{\hbox{\begin{tikzpicture}
\node (1) at (0,2) {$(C')^{ss}_\theta$};
\node (2) at (4,2) {$C^{ss}_\theta$};
\node (3) at (0,0) {$\prod_{i=1}^r S^{m_i}(\M(C_i)^{ss}_{\theta})$};
\node (4) at (4,0) {$\M(C)^{ss}_{\theta}$};
\path[->>]
(1) edge node[left] {$\pi'$} (3)
(2) edge node[right] {$\pi$} (4);
\path[right hook-latex]
(1) edge node[above] {$i$} (2);
\path[->]
(3) edge node[above] {$\Psi$} (4);
\end{tikzpicture}}}
\end{equation}
where the vertical maps are the quotient morphisms. The following proposition, which will be proved in the next subsection, gives us the essential properties of $\Psi$.

\begin{prop}\label{psi-properties} The morphism $\Psi$ is finite and birational.
\end{prop}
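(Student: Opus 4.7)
The plan is to exploit the fact that both source and target of $\Psi$ are irreducible projective varieties: each $\M(C_i)^{ss}_\theta$ is $\Proj$ of a finitely generated graded ring built from the irreducible variety $C_i$, and symmetric powers and finite products preserve both projectivity and irreducibility. Hence $\Psi$ is automatically proper. It is also surjective: by definition of the $\theta$-stable decomposition $C = m_1 C_1 \pp \cdots \pp m_r C_r$, a general point of $\M(C)^{ss}_\theta$ corresponds to a polystable of the form $\bigoplus_i \bigoplus_{j=1}^{m_i} L^{(i)}_j$ with each $L^{(i)}_j \in (C_i)^s_\theta$, which visibly lies in the image of $\Psi$; so the image contains a dense subset of $\M(C)^{ss}_\theta$ and, being closed by properness, equals it. It therefore suffices to show (i) $\Psi$ has finite fibers, which combined with properness yields finiteness, and (ii) $\Psi$ is injective on some nonempty open subset of its domain, which combined with irreducibility of both varieties yields birationality.

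For the finite fibers I will use Jordan--H\"older in the abelian category of $\theta$-semistable representations. Each point $\widetilde M \in \M(C)^{ss}_\theta$ corresponds to a unique isomorphism class of $\theta$-polystable representation which decomposes uniquely into a finite multiset $\{L_1,\ldots,L_N\}$ of $\theta$-stable composition factors. A preimage in $\prod_i S^{m_i}(\M(C_i)^{ss}_\theta)$ amounts to, for each $i$, an unordered $m_i$-tuple of $\theta$-polystables in $C_i$ whose total direct sum is isomorphic to $\widetilde M$; equivalently, it is a partition of $\{L_1,\ldots,L_N\}$ into $\sum m_i$ nonempty groups indexed by pairs $(i,j)$ with $1\le j\le m_i$, each group's direct sum yielding a polystable of dimension vector $\dd_i$ lying in $C_i$. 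Since the multiset is finite, the number of such partitions is bounded, so every fiber is finite.

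For the injectivity on a dense open, each $(C_i)^s_\theta \subseteq C_i$ is nonempty and open by the $\theta$-stability assumption on $C_i$. Moreover, since $C_i \neq C_j$ are distinct irreducible components of their respective representation varieties, whenever $\dd_i = \dd_j$ with $i \neq j$ the intersection $C_i \cap C_j$ is a proper closed subset of each. Hence there is a nonempty open $W_i \subseteq (C_i)^s_\theta$ of $\theta$-stable representations lying in $C_i$ but in no other $C_j$ of the same dimension vector. Let $W \subseteq \prod_i S^{m_i}(\M(C_i)^{ss}_\theta)$ be the open locus of tuples $((N^{(i)}_j))$ with every $N^{(i)}_j \in W_i$ and all $N^{(i)}_j$ pairwise non-isomorphic across all $(i,j)$. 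A point of $W$ maps under $\Psi$ to a polystable $\widetilde M$ with exactly $\sum m_i$ distinct $\theta$-stable summands. By the partition description above, any preimage must have each of the $\sum m_i$ groups of size exactly one; the condition $W_i \cap C_j = \varnothing$ for $j \neq i$ then forces each singleton to be assigned back to its original $C_i$, and the unorderedness built into $S^{m_i}$ completes the identification. Thus $\Psi$ is bijective on $W$, hence birational.

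The main obstacle I foresee is precisely the case where distinct $C_i$ and $C_j$ share a dimension vector $\dd_i = \dd_j$, which is the situation excluded in the earlier versions cited in the introduction. There a single $\theta$-stable representation could a priori lie in several $C_i$'s simultaneously, threatening the uniqueness of the ``home'' assignment of each stable summand and hence the injectivity step above. The resolution is the observation that distinct irreducible components must meet in a proper closed subset of each, so generic points lie in only one of them; this is exactly what makes the construction of $W$ go through and is the only place in the argument where the condition $C_i \neq C_j$ (as opposed to the stronger $\dd_i \neq \dd_j$) is used in an essential way.
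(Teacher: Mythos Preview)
Your argument is correct. The birationality half is essentially the paper's proof: both of you remove from each $C_i$ the loci where it meets the other $C_j$ of the same dimension vector, restrict to $\theta$-stable points there, and use simplicity of stables in $\rep(A)^{ss}_\theta$ to pin down the decomposition uniquely. The finiteness half, however, is genuinely different. You argue that source and target are projective (hence $\Psi$ is proper), that Jordan--H\"older in $\rep(A)^{ss}_\theta$ bounds each fiber of $\Psi$ by the finite set of partitions of the multiset of stable summands of the target polystable, and then invoke ``proper $+$ quasi-finite $\Rightarrow$ finite''. The paper instead passes to the affine cones $C//G_\theta$ and $C'//G'_\theta$, proves a separate invariant-theoretic lemma (Lemma~\ref{finite-map-lemma}, a graded version of the standard ``fiber over the origin is the origin $\Rightarrow$ finite'' criterion), and verifies its hypothesis via a careful $1$-parameter subgroup argument showing that if $0\notin\overline{G'_\theta\cdot M}$ then every block summand of $M$ is $\theta$-semistable. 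Your route is shorter and more categorical, and bypasses both Lemma~\ref{finite-map-lemma} and the explicit identification of $K[C']^{G'_\theta}$ in Proposition~\ref{KXG-prop}; the paper's route is more hands-on with the graded coordinate rings, which makes the passage from the cone map $\psi$ to $\Psi$ explicit and yields Lemma~\ref{finite-map-lemma} as a result of possible independent use. One small point worth making explicit in your write-up: projectivity of $\M(A,\dd)^{ss}_\theta$ (hence of $\M(C)^{ss}_\theta$ and of each $\M(C_i)^{ss}_\theta$) relies on $A$ being finite-dimensional, so that $K[\rep(A,\dd)]^{\GL(\dd)}=K$; without this the moduli spaces are only projective over an affine base and your properness step would need adjustment.
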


Assuming Proposition \ref{psi-properties}, we can finish proving our main theorem.
 
\begin{proof}[Proof of Theorem \ref{main-thm}(c)] Proposition \ref{psi-properties} shows that $\Psi$ is finite and birational. This, combined with the assumption that $\M(C)^{ss}_{\theta}$ is normal, implies that $\Psi$ is in fact an isomorphism of varieties. (This is a standard fact from algebraic geometry: the isomorphism property can be checked locally on the target space, then use that by definition a normal domain admits no nontrivial finite extensions within its field of fractions.)
\end{proof}

\begin{remark} In practice, one way to check that $\M(C)^{ss}_{\theta}$ is normal is to show that the semi-stable locus $(\overline{\bigoplus_{i=1}^r C_i^{m_i}})^{ss}_{\theta} $ is normal, after throwing away the $C_i$ which are orbit closures.
\end{remark}

\subsection{Proof of technical ingredients} For the remainder of the section, set
\begin{itemize}
\item $C':=\prod_{i=1}^r C_i^{m_i} \subseteq C$;

\item $G_{\theta}:=\ker(\chi_{\theta}) \leq \GL(\dd)$, with $\chi_{\theta}$ as in \eqref{eq:chitheta};

\item $G':=\prod_{i=1}^r \left( S_{m_i} \ltimes \GL(\dd_i)^{m_i} \right)\leq \GL(\dd)$, acting naturally on $C'$;

\item $G'_\theta := G' \cap G_{\theta}$, i.e. $G'_{\theta}$ is the kernel of the restriction of $\chi_{\theta}$ to $G'$.
\end{itemize}

It now remains to prove Proposition \ref{psi-properties} along with Proposition \ref{KXG-prop} below. It is easier to work with the affine quotients $C//G_\theta = \Spec(K[C]^{G_\theta})$ and $C'//G'_\theta = \Spec(K[C']^{G'_\theta})$. Restriction of invariant functions $\psi^*\colon K[C]^{G_\theta} \to K[C']^{G'_\theta}$ induces the morphism of affine varieties
$$
\psi\colon C'//G'_\theta \to C//G_{\theta}, \qquad \psi(\pi_C'(x))=\pi_C(x), \forall x \in C'
$$
where $\pi_{C'}:C' \to C'//{G'_{\theta}}$ and $\pi_C:C \to C//G_{\theta}$ are the quotient morphisms induced by the inclusions of the invariant rings.

We need the following assumptions, which result in no loss of generality since our moduli spaces are unchanged when replacing $\theta$ by any of its positive multiples.
\begin{itemize}
\item $\sum_{x \in Q_0} \theta(x)$ is an even number (this is essential for Proposition \ref{KXG-prop});
\item $\bigoplus_{m \geq 0} \SI(C)_{m\theta}$ is generated by semi-invariants of weight $\theta$ (this is very useful for Proposition \ref{psi-properties}).
\item no $C_i$ is an orbit closure (this convenient for both, and no loss of generality by (b) of Theorem \ref{main-thm}).
\end{itemize}

\begin{prop}\label{KXG-prop}
With $\theta$ as above, we have
\begin{equation}\label{eq:KXGtheta}
K[C']^{G'_\theta}=\bigoplus_{m \geq 0}\bigotimes_{i=1}^r S^{m_i}\left( \SI(C_i)_{m \theta} \right), 
\end{equation} 
i.e. the affine quotient variety $C'//G'_{\theta}$ is the affine cone over $\prod_{i=1}^r S^{m_i}(\M(C_i)^{ss}_{\theta})$.
\end{prop}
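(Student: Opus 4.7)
The plan is to compute $K[C']^{G'_\theta}$ in two stages by exploiting the semidirect product decomposition of $G'_\theta$. First I would verify that each $S_{m_i}$ lies in $G_\theta$: a permutation $\sigma \in S_{m_i}$ embeds into $\GL(\dd)$ as block-permutation matrices that, at each vertex $x$, permute $m_i$ blocks of size $\dd_i(x)$, so they have determinant $\mathrm{sgn}(\sigma)^{\dd_i(x)}$. Thus $\chi_\theta(\sigma) = \mathrm{sgn}(\sigma)^{\theta(\dd_i)} = 1$, using that $\theta(\dd_i)=0$. Setting $H := \bigl(\prod_i \GL(\dd_i)^{m_i}\bigr) \cap G_\theta$, this gives $G'_\theta = H \rtimes \prod_i S_{m_i}$, so $K[C']^{G'_\theta} = \bigl(K[C']^H\bigr)^{\prod_i S_{m_i}}$.

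For the first stage I would use weight space decomposition. The restriction of $\chi_\theta$ to $\prod_i \GL(\dd_i)^{m_i}$ is a surjective homomorphism onto $K^\ast$, so the characters trivial on $H$ are exactly the powers $m\chi_\theta$ for $m \in \ZZ$; viewed as tuples indexed by $(i,j)$, these are $(\chi_{i,j})$ with every $\chi_{i,j}$ equal to $m\theta$ as a character of $\GL(\dd_i)$. Combined with $K[C'] = \bigotimes_i K[C_i]^{\otimes m_i}$ and the weight decomposition $K[C_i] = \bigoplus_\chi \SI(C_i)_\chi$, this yields
\[
K[C']^H \;=\; \bigoplus_{m \in \ZZ} \bigotimes_{i=1}^{r} \SI(C_i)_{m\theta}^{\otimes m_i}.
\]
In the second stage, each $S_{m_i}$ acts only by permuting the $m_i$ tensor factors within the $i$-th block, producing $S^{m_i}\bigl(\SI(C_i)_{m\theta}\bigr)$ in each degree, hence
\[
K[C']^{G'_\theta} \;=\; \bigoplus_{m \in \ZZ} \bigotimes_{i=1}^{r} S^{m_i}\bigl(\SI(C_i)_{m\theta}\bigr).
\]

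The step I expect to be the main obstacle is collapsing this $\ZZ$-indexed sum to $m \geq 0$, i.e.\ proving $\SI(C_i)_{m\theta} = 0$ for $m<0$. I would argue this by combining the $\theta$-stability of $C_i$ (so that $C_i^s_\theta$ is open dense in $C_i$) with the standing assumption that no $C_i$ is an orbit closure: for a generic $\theta$-stable $M \in C_i$, the orbit $\GL(\dd_i)\cdot M$ is not closed in $\rep(A,\dd_i)$, so by Hilbert-Mumford there exists a 1-parameter subgroup $\lambda$ with $\lim_{t\to 0}\lambda(t)M$ existing and $\chi_\theta\circ\lambda$ of strictly positive weight; evaluating a putative nonzero $f \in \SI(C_i)_{m\theta}$ ($m<0$) at this limit forces $f(M)=0$, and density of $C_i^s_\theta$ gives $f\equiv 0$. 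The parity assumption $\sum_x \theta(x)\in 2\ZZ$ flagged as essential likely enters in a related sign-tracking step, perhaps via the reduction to the acyclic case on the Schofield double quiver $\widehat Q$ developed earlier in the paper. Once this vanishing is in hand, the right-hand side is the Segre-product homogeneous coordinate ring of $\prod_i S^{m_i}(\M(C_i)^{ss}_\theta)$, identifying $C'//G'_\theta$ with the claimed affine cone.
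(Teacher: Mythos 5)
Your argument is substantively the same as the paper's: both compute $K[C']^{G'_\theta}$ via the weight-space decomposition of $K[C']$ under the diagonal blocks and the semidirect product structure of $G'_\theta$, landing on symmetric tensor powers of the graded pieces $\SI(C_i)_{m\theta}$. Where you differ is the auxiliary subgroup. You work with $H = \bigl(\prod_i\GL(\dd_i)^{m_i}\bigr) \cap G_\theta$ and the decomposition $G'_\theta = H \rtimes \prod_i S_{m_i}$, computing invariants in two clean stages; the paper instead observes that $L = \prod_j \bigl(S_{m_j}\ltimes\Ker(\chi_j)^{m_j}\bigr) \leq G'_\theta$, computes $K[C']^L$ (a larger ring), and then filters out the weight-$\chi^m$ summands. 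Your choice of $H$, being the full kernel of $\chi_\theta$ on the block-diagonal torus rather than the product of the factorwise kernels, makes that extra filtering step automatic, so your route is slightly more economical.

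Two further points. First, your character computation is in fact the more careful one: the block permutation matrix for $\sigma$ at vertex $x$ permutes $m_i$ blocks of size $\dd_i(x)$, so its determinant is $\operatorname{sgn}(\sigma)^{\dd_i(x)}$ and hence $\chi_\theta(\sigma) = \operatorname{sgn}(\sigma)^{\sum_x \theta(x)\dd_i(x)} = \operatorname{sgn}(\sigma)^{\theta(\dd_i)} = 1$ unconditionally, since $\theta(\dd_i)=0$. The exponent in the paper's equation \eqref{char-value-eq} is recorded as $\sum_x\theta(x)$, which agrees with this only when $\dd_j$ is identically $1$; with the correct exponent $\theta(\dd_j)=0$ the parity hypothesis $\sum_x\theta(x)\in 2\ZZ$ plays no role in this proposition, so your suspicion that the parity must be needed somewhere is not borne out here. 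Second, the vanishing $\SI(C_i)_{m\theta}=0$ for $m<0$ is the background identity \eqref{eq:KCGtheta} applied to $C_i$, valid because $\chi_i$ is nontrivial (which is exactly what the hypothesis that $C_i$ is $\theta$-stable but not an orbit closure buys you); the paper simply cites it. Your Hilbert--Mumford sketch can be made to work, but the sign is off and the attribution of hypotheses is a little muddled: a destabilizing one-parameter subgroup $\lambda$ with $\lim_{t\to 0}\lambda(t)M$ existing pairs \emph{non-positively} with $\theta$ for a $\theta$-semistable $M$ (King's criterion), not positively. The clean version is to take $\lambda$ with $\lim_{t\to 0}\lambda(t)M = 0_{\dd_i}$, available because $A$ is finite-dimensional (not because $C_i$ fails to be an orbit closure), note that semistability gives $\langle\theta,\lambda\rangle\leq 0$ while the existence of a positive-weight semi-invariant nonvanishing at $M$ but vanishing at $0_{\dd_i}$ forces the inequality to be strict, and then deduce $f(M)=0$ for any $f$ of weight $m\theta$ with $m<0$.
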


\begin{proof}[Proof of Proposition \ref{KXG-prop}]
Now, let $G'$ be as above and denote by $\chi$ the restriction of $\chi_{\theta}$ to $G'$, so that $G'_\theta=\ker(\chi)$. Then we have the weight space decomposition
\begin{equation}
K[C']^{G'_\theta}=\bigoplus_{m \in \ZZ} \SI(C',G')_{\chi^m},
\end{equation}
where $\SI(C',G')_{\chi^m}:=\{f \in K[C'] \mid g\cdot f=\chi^m(g)f, \forall g \in G' \}$ is the space of $G'$-semi-invariants on $C'$ of weight $\chi^m$, $m \in \ZZ$. 

To show the containment $\supseteq$ of \eqref{eq:KXGtheta}, we denote by $\chi_j$, $1 \leq j \leq r$, the rational character of $\GL(\dd_j)$ induced by $\theta$. Then, for an arbitrary element $n=\sigma \cdot g \in G'$, where $\sigma=\sigma_1 \times \ldots \times \sigma_r \in S_{m_1}\times \ldots \times S_{m_r}$ and $g=(g^j_i)_{1 \leq i \leq m_j, 1 \leq j \leq r}\in \prod_{j=1}^r \GL(\dd_j)^{m_j} \subseteq \GL(\dd)$, we calculate the character value
\begin{equation} \label{char-value-eq}
\chi(n)=\prod_{j=1}^r sgn(\sigma_j)^{ \sum_{x \in Q_0} \theta(x)} \cdot \prod_{j=1}^r \prod_{i=1}^{m_j} \chi_j(g^j_i)=\prod_{j=1}^r \prod_{i=1}^{m_j} \chi_j(g^j_i)
\end{equation}
(the second equality is using that $\sum_x \theta(x)$ is even). Next note that an arbitrary element of the right hand side of \eqref{eq:KXGtheta} is a $K$-linear combination of elements of the form $h_m^1 \otimes \ldots \otimes h_m^r$ with $h_m^j \in S^{m_j}(\SI(C_j)_{m \theta})$, $1 \leq j \leq r$, $m \geq 0$. But such elements, viewed as a regular functions on $C'$, are easily seen to be $G'_{\theta}$-invariant. Indeed, let us fix $m$ and $j$, and a $K$-basis $f^1, \ldots, f^N$ of $\SI(C_j)_{m\theta}$. Then we can write $$h^j_m=\sum_{l_1, \ldots, l_{m_j}=1}^N T_{l_1, \ldots, l_{m_j}} f^{l_1} \otimes \ldots \otimes f^{l_{m_j}}$$ for unique $T_{l_1, \ldots, l_{m_j}} \in K$ such that $T_{l_1, \ldots, l_{m_j}}=T_{l_{\nu(1)}, \ldots, l_{\nu(m_j)}}$ for any $\nu \in S_{m_j}$. We can now see that $n \cdot h^j_m=\prod_{i=1}^{m_j} \chi^m_j(g^j_i)h^j_m$, and consequently
$$
n \cdot (h_m^1 \otimes \ldots \otimes h_m^r)= \prod_{j=1}^r \prod_{i=1}^{m_j} \chi^m_j(g^j_i)   h_m^1 \otimes \ldots \otimes h_m^r=\chi^m(n)h_m^1 \otimes \ldots \otimes h_m^r.
$$
So the containment $\supseteq$ holds in \eqref{eq:KXGtheta} \footnote{We point out that the inclusion $\supseteq$ in \eqref{eq:KXGtheta} does not hold if $\sum_{x \in Q_0}\theta(x)$ is odd. Indeed, if that is the case, then one can easily find elements $n \in G'_\theta$ such that $n\cdot f=-f$ for any $f \in \bigotimes_{i=1}^r S^{m_i}\left( \SI(C_i)_{\theta} \right)$, viewed as a regular function on $C'$.}. For the other containment $\subseteq$, consider $L:=\prod_{j=1}^r S_{m_j} \ltimes \Ker(\chi_j)^{m_j}$. Since $L \leq G'_\theta$ by \eqref{char-value-eq}, we have that
\begin{equation}\label{eq:KXGtheta2}
K[C']^{G'_\theta} \subseteq K[C']^L=\bigotimes_{j=1}^r \left(\underbrace{K[C_j]^{\Ker(\chi_j)}\otimes \ldots \otimes K[C_j]^{\Ker(\chi_j)}}_{m_j\text{~times}} \right)^{S_{m_j}}.
\end{equation}
Next, note that each $\chi_j$ is not the trivial character of $\GL(\dd_j)$ since $C_j$ contains a $\theta$-stable representation and is not an orbit closure. 
Therefore, we have that $K[C_j]^{\Ker(\chi_j)}=\bigoplus_{m \geq 0} \SI(C_j)_{m\theta}$.
Applying this to each term on the right hand side of \eqref{eq:KXGtheta2}, we can write
\begin{equation} \label{wt-space-dec-eqn}
K[C']^L=\bigotimes_{j=1}^r \left(\underbrace{\left(\bigoplus_{m \geq 0} \SI(C_j)_{m\theta}\right)\otimes \ldots \otimes \left(\bigoplus_{m \geq 0} \SI(C_j)_{m\theta}\right)}_{m_j\text{~times}} \right)^{S_{m_j}}.
\end{equation}
Now, a simple check shows that for each $m \in \ZZ$, the subspace of the right hand side of \eqref{wt-space-dec-eqn}  consisting of the $G'$-semi-invariants on $C'$ of weight $\chi^m$ is precisely
\begin{equation} \label{wt-space-eqn}
\bigotimes_{j=1}^r \left(\underbrace{\SI(C_j)_{m\theta}\otimes \ldots \otimes \SI(C_j)_{m\theta}}_{m_j\text{~times}} \right)^{S_{m_j}}.
\end{equation}
Finally, combining \eqref{eq:KCGtheta}, \eqref{eq:KXGtheta2}, \eqref{wt-space-dec-eqn}, and \eqref{wt-space-eqn}, we obtain

$$
\SI(C',G')_{\chi^m}=
\begin{cases}
\bigotimes_{j=1}^r S^{m_j}(\SI(C_j)_{m\theta}) & \text{~if~} m\geq 0,\\
0 & \text{~otherwise.}
\end{cases}
$$
This finishes the proof of our first technical proposition. 
\end{proof}

To prove Proposition \ref{psi-properties}, we need the following result in invariant theory which may be known to experts, but for which we are unaware of a suitable reference.  It seems to be a relative version of \cite[Lemma 2.4.5]{Harm-book-02}.

\begin{lemma} \label{finite-map-lemma} Let $H'$ and $H$ be a linearly reductive groups with $H' \leq H$ and $V$ a finite-dimensional rational $H$-module. Let $X$ be an affine $H$-subvariety of $V$ and $X'$ an affine $H'$-subvariety of $X$ such that $0 \in X'$, and denote by $\pi_{X'}\colon X' \to X'//H'$ and $\pi_X\colon X \to X//H$ the quotient morphisms. Denote the image of $0 \in V$ through the two morphisms by the same symbol $\mathbf{0}$.

Let $\psi: X'//H' \to X//H$ be the morphism of varieties induced by the restriction homomorphism $\psi^{*}:K[X]^H \to K[X']^{H'}$.
Suppose furthermore that $\psi$ is $K^*$-equivariant for some torus $K^*$ acting on these spaces  which fixes $\mathbf{0}$, and that the induced grading on coordinate rings is supported in nonnegative degrees, with the maximal ideals of functions vanishing at $\mathbf{0}$, say $\m \subset K[X]^H$ and $\m' \subset K[X']^{H'}$, being contained in the positive degree parts of these rings.

Under these assumptions, if $\psi^{-1}(\mathbf{0})=\mathbf{0}$, then $\psi$ is a finite morphism.
\end{lemma}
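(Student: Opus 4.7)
My plan is to translate the conclusion into a ring-theoretic statement and reduce to a graded Nakayama argument. Write $R=K[X]^H$ and $S=K[X']^{H'}$, both finitely generated $K$-algebras since $H'\leq H$ are linearly reductive. The $K^*$-equivariance of $\psi$ means exactly that $\psi^*\colon R\to S$ is a graded homomorphism with respect to the gradings on $R$ and $S$ induced by the torus actions, and the nonnegativity hypothesis says $R=\bigoplus_{n\geq 0}R_n$ and $S=\bigoplus_{n\geq 0}S_n$. The assumption that $\m$ and $\m'$ lie in the positive-degree parts forces $R_0\hookrightarrow R/\m=K$ and $S_0\hookrightarrow S/\m'=K$, so $R_0=S_0=K$ and $\m,\m'$ are the irrelevant ideals. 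Choosing homogeneous generators of the finitely generated $K$-algebra $S$, each necessarily of positive degree except for scalars, we also see that each graded piece $S_n$ is finite-dimensional. Showing that $\psi$ is finite is equivalent to showing that $S$ is finitely generated as an $R$-module via $\psi^*$.

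The key step is to show that $S/\psi^*(\m)S$ is finite-dimensional over $K$. The closed subscheme $\psi^{-1}(\mathbf{0})\subseteq X'//H'$ has coordinate ring $S/\psi^*(\m)S$, so the set-theoretic hypothesis $\psi^{-1}(\mathbf{0})=\{\mathbf{0}\}$ combined with Hilbert's Nullstellensatz yields $\sqrt{\psi^*(\m)S}=\m'$. Since $S$ is Noetherian, there exists $N\geq 1$ with $(\m')^N\subseteq \psi^*(\m)S$. As $\m'$ is generated in positive degrees and $S$ is generated by finitely many homogeneous elements of positive degree (together with the field $S_0=K$), we obtain $(\m')^N\supseteq S_{\geq N'}$ for some $N'\geq N$. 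Consequently $S/\psi^*(\m)S$ is a quotient of $\bigoplus_{n=0}^{N'-1}S_n$, and is therefore finite-dimensional.

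To conclude I apply graded Nakayama. Pick homogeneous elements $s_1,\ldots,s_k\in S$ whose images form a $K$-basis of $S/\psi^*(\m)S$, and let $M=\sum_i R\cdot s_i\subseteq S$. Then $S=M+\psi^*(\m)S$, and an induction on degree shows $S=M$: for a homogeneous $s\in S_n$ one writes $s\equiv \sum_i r_i s_i \pmod{\psi^*(\m)S}$, and the error term lies in $\psi^*(\m)S$, hence is a sum of products $\psi^*(a)\cdot s'$ with $a\in\m=R_{>0}$ and $s'\in S$ of strictly smaller degree, which already lies in $M$ by the inductive hypothesis. Hence $S$ is finitely generated as an $R$-module, and $\psi$ is finite. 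The one place that requires any care is extracting the containment $(\m')^N\subseteq \psi^*(\m)S$ from the purely set-theoretic fiber hypothesis; once this is in hand, the graded structure of the rings does the rest of the work.
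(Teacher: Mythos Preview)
Your proof is correct and follows essentially the same route as the paper's: both use the Nullstellensatz to deduce $\sqrt{\psi^*(\m)S}=\m'$, then Noetherianity to get $(\m')^N\subseteq\psi^*(\m)S$, and finish with a degree-induction argument exploiting that $\m$ and $\m'$ are generated in positive degrees. The only cosmetic difference is packaging: the paper writes down an explicit finite generating set (monomials in generators of $\m'$ with all exponents $<N$) and argues by minimal counterexample, while you phrase the same induction as graded Nakayama applied to a $K$-basis of $S/\psi^*(\m)S$.
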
 

\begin{proof}
Since $H', H$ are linearly reductive, both $K[X]^H$ and $K[X']^{H'}$ are finitely generated $K$-algebras (see for example \cite[Cor. 2.2.11]{Harm-book-02}), thus Noetherian, so that $\m$ and $\m'$ are finitely generated ideals within these algebras.
We want to show that the morphism of varieties $\psi$ is finite, which by definition means that the extension $\psi^*(K[X]^H) \subseteq K[X']^{H'}$ is module finite.

The assumption that $\psi^{-1}(\mathbf{0}) = \{\mathbf{0}\}$ translates to the equality of vanishing sets
 \begin{equation}\label{eq:vanishingsets}
 V(\psi^*(\m))=\{\mathbf{0}\} = V(\m')
 \end{equation}
 in $X'//H'$.
Let $I=\psi^*(\m)K[X']^{H'}$ be the ideal generated by $\psi^*(\m)$ in $K[X']^{H'}$.  By Hilbert's Nullstellensatz and the fact that $\m'$ is maximal, \eqref{eq:vanishingsets} implies that $\sqrt{I} = \m'$.
Since $\m'$ is finitely generated, some power of $\m'$ is contained in $I$, say $(\m')^N \subseteq I$.

Write $\m = \langle f_1, \dotsc, f_r\rangle$ and $\m' = \langle h_1, \dotsc, h_s\rangle$ where each $f_i$ and $h_i$ is homogeneous of positive degree; we have that each $\psi^*(f_i)$ is homogeneous in $K[X']$ as well since $\psi$  is $K^*$-equivariant.  
Now we claim that the set $\mathcal{S} = \setst{h_1^{i_1} \cdots h_s^{i_s}}{\forall j:  0 \leq i_j < N}$ generates $K[X']^{H'}$ as a $\psi^*(K[X]^H)$-module. Indeed, since $K[X']^{H'} = K[h_1, \dotsc, h_s]$, it is enough to show that an arbitrary monomial $h_1^{i_1} \cdots h_s^{i_s}$ is in the $\psi^*(K[X]^H)$-span of $\mathcal{S}$. Suppose not, for contradiction, and take a minimal degree counterexample;   
without loss of generality assume that $i_1 \geq N$.   Since $(\m')^N \subseteq I$,  we can rewrite $h_1^{i_1} = \sum_j \alpha_j \psi^*(f_j)$ for some $\alpha_j \in K[X']^{H'}$, each of degree smaller than the degree of $h_1^{i_1}$ since each $\psi^*(f_j)$ is of positive degree.  By the minimality assumption, we have for each $j$ that the monomial $\alpha_j h_2^{i_2} \cdots h_s^{i_s}$ is in the $\psi^*(K[X]^H)$-span of $\mathcal{S}$.  So substitution shows that the original monomial was as well, a contradiction which completes the proof.
\end{proof}

\begin{proof}[Proof of Proposition \ref{psi-properties}] First we will show that $\Psi$ is birational by checking that $\Psi$ is dominant and injective on a dense subset. The fact that $\Psi$ is dominant follows immediately from the definition of $\theta$-stable decomposition. 

Now we show that $\Psi$ is injective on a dense subset of $(C')^{ss}_{\theta}//PG'$ where $PG':=\prod_{i=1}^r \left( S_{m_i} \ltimes \PGL(\dd_i)^{m_i} \right)$. For each $i$, let
\[
C_i^\circ = C_i \setminus \left( \bigcup_{\substack{\dd_{i'} = \dd_i\\ C_{i'} \nsupseteq C_i}} C_{i'} \right) =  \bigcap_{\substack{\dd_{i'} = \dd_i\\ C_{i'} \nsupseteq C_i}} \left(C_i \setminus C_{i'} \right).
\]
Note that since each $C_i$ is closed and irreducible, $C_i \cap C_{i'}$ has smaller dimension than $C_i$ whenever $C_{i'} \nsupseteq C_i$, so each such $C_i \setminus C_{i'}$ is open and dense in $C_i$.  Since $C_i^\circ$ is a finite intersection of such subsets, it is open and dense in $C_i$ as well.  Then $U = \prod_i ((C_i^\circ)^s_{\theta})^{m_i}$ is open and dense in $C'$, and $\pi'(U)$ is dense in $(C')^{ss}_{\theta}//PG'$.

Since the $C_i$ are assumed to be distinct, for each pair $i \neq i'$ we must have that either $C_{i'} \nsupseteq C_i$ or $C_i \nsupseteq C_{i'}$, so by construction, we have $C_i^\circ \cap C_{i'}^\circ = \emptyset$ whenever $\dd_i = \dd_{i'}$ but $i \neq i'$.  Restricting to stable representations now, we find that $\Hom_A(M, N) = 0 =\Hom_A(N, M)$ whenever $M \in (C_i^\circ)^s_{\theta}$ and $N \in (C_{i'}^\circ)^s_{\theta}$ with $i \neq i'$, since stable representations are simple objects in the (full) category of semistable representations of $A$.

Now let $M, N \in U$ be such that $\Psi(\pi'(M))=\Psi(\pi'(N))$, so by definition $\pi(M)=\pi(N)$ in $C^{ss}_{\theta}//\PGL(\dd)$. Since $M$ and $N$ are $\theta$-polystable, their $\PGL(\dd)$-orbits are closed in $C^{ss}_{\theta}$ and so $M$ and $N$ are in the same $\PGL(\dd)$-orbit, which is the same as saying that $gM = N$ for some $g \in \GL(\dd)$. In particular, they are isomorphic representations of $A$.  We will use this to show that $g \in G'$ which will imply that $\pi'(M) = \pi'(N)$ in $(C')^{ss}_{\theta}//PG'$ and complete our proof that $\psi$ is injective on $\pi'(U)$.

Since $M$ and $N$ are $\theta$-polystable, they are semi-simple objects in the category of $\theta$-semistable representations of $A$, which greatly restricts the possible isomorphisms between them.  Write
\begin{equation}
M = \bigoplus_{i=1}^r \bigoplus_{j=1}^{m_i} M^j_i \qquad \text{and} \qquad N = \bigoplus_{i=1}^r \bigoplus_{j=1}^{m_i} N^j_i
\end{equation}
where each $M^j_i, N^j_i \in (C_i^\circ)^s_{\theta}$.  We know that $\Hom_A(M^j_i, N^{j'}_{i'}) = 0$ if $i \neq i'$ by the observation two paragraphs above. So for any isomorphism $M\xrightarrow{\varphi} N$, there exist  permutations $\sigma_1, \dotsc, \sigma_r$ such that $\varphi$ is a direct sum of isomorphisms of the form $M^j_i \simeq N^{\sigma_i(j)}_i$.
These isomorphisms are realized by elements $g^j_i \in \GL(\dd_j)$ satisfying $g^j_i M^j_i  = N^{\sigma_i(j)}_i$ for all $i, j$.  Therefore, our $g \in \GL(\dd)$ above is of the form
\begin{equation}
g = (\sigma_j, g_1^j, \dotsc, g_{m_j}^j)_{j=1}^r \in G'
\end{equation}
so we see that $M$ and $N$ are indeed in the same $G'$, equivalently, $PG'$ orbit. We have just proved that $\Psi$ is injective on a dense subset which, combined with $\Psi$ being dominant, implies that $\Psi$ is birational.

To prove that $\Psi$ is finite, we first show that $\psi$ is finite.  We do this by applying Lemma \ref{finite-map-lemma} with $H'=G'_\theta$ and $H=G_\theta$ acting on $X'=C'$ and $X=C$ inside the rational $G_\theta$-module $\rep_Q(\dd)$. Note that $G'/G'_\theta$ and $\GL(\dd)/G_\theta$, which act on $C'//G'_{\theta}$ and $C//G_{\theta}$, respectively, can be identified with $K^*$, making $\psi$ a $K^*$-equivariant morphsim.
Therefore, to show that $\psi$ is finite, it is enough to check that $\psi^{-1}(\mathbf{0})=\{\mathbf{0}\}$. This is equivalent to checking that, for $M \in C'$ with $0 \notin \overline{G'_\theta \cdot M}$, we have that $0 \notin \overline{G_{\theta} \cdot M}$, or in other words that $M$ is $\theta$-semistable. For such an $M$, write 
$$M=\bigoplus_{i=1}^r M^1_i\oplus \ldots \oplus M^{m_i}_i,$$
where $M^j_i \in C_i$ for all $1 \leq i \leq r$ and $1 \leq j \leq m_i$. We claim that each $M^j_i$ is $\theta$-semistable. For a contradiction, let assume that at least one of the $M^j_i$'s, say $M^1_1$, is not $\theta$-semistable. Denote by $\chi_1$ the rational character of $\GL(\dd_1)$ induced by $\theta$.  Then there exists a $1$-parameter subgroup $\lambda' \colon K^* \to \Ker(\chi_1)$ such that 
$$\lim_{t \to 0} \lambda'(t)M^1_1=0_{\dd_1},$$ 
where $0_{\dd_1}$ is the zero element of $\rep(Q,\dd_1)$. Viewing $\Ker(\chi_1)$ as a subgroup of $G'_\theta$, we get a $1$-parameter subgroup $\lambda \colon K^* \to G'_\theta$ such that 
$$
\lim_{t \to 0} \lambda(t)M=0_{\dd_1} \oplus M_1^2 \oplus \ldots \oplus M_r^{m_r}.
$$
In particular, this shows that
\begin{equation}\label{eq-1-closure}
0_{\dd_1} \oplus M_1^2 \oplus \ldots \oplus M_r^{m_r} \in \overline{G'_\theta \cdot M}.
\end{equation}
Now, since $A$ is finite-dimensional, we know that $0_{\dd_i} \in \overline{\GL(\dd_i) M_i^j}$ for all $1 \leq i \leq r$, $1 \leq j \leq m_i$. So, there are $1$-parameter subgroups $\lambda^j_i \colon K^* \to \GL(\dd_i)$ with $1 \leq j \leq m_i$, $1 \leq i \leq r$, and $(i,j)\neq (1,1)$, such that 
$$
\lim_{t \to 0} \lambda^j_i(t) M^j_i = 0_{\dd_i}. 
$$
Given these $1$-parameter subgroups, define a 1-parameter subgroup $\lambda^1_1$ of $\GL(\dd_1)$, given by the formula
\begin{equation}
\lambda^1_1(t)=\left( {\rm diag}\left(\prod \limits_{\substack{i,j \\ (i,j) \neq (1,1)}}\det(\lambda^j_i(t)(x))^{-1}, 1, \dotsc, 1\right)\right)_{x \in Q_0} .
\end{equation}
Then the 1-parameter subgroup $\mu$ of $\GL(\dd_1)^{m_1} \times \ldots \times \GL(\dd_r)^{m_r}$ given componentwise by $\mu(t) = (\lambda^j_i(t))_{i,j}$ satisfies $\chi_{\theta}(\mu(t))=1, \forall t \in K^*$.
So $\mu$ is in fact a $1$-parameter subgroup of $G'_\theta$. Moreover, we have that
$$
\lim_{t \to 0}\mu(t)\cdot (0_{\dd_1}\oplus M_1^2 \oplus \ldots \oplus M_r^{m_r})=0_{\dd_1}^{m_1}\oplus \ldots \oplus 0_{\dd_r}^{m_r},
$$
which shows that 
\begin{equation}\label{eq-2-closure}
0_{\dd_1}^{m_1}\oplus \ldots \oplus 0_{\dd_r}^{m_r} \in \overline{G'_\theta \cdot \left( 0_{\dd_1}\oplus M_1^2 \oplus \ldots \oplus M_r^{m_r}\right)}.
\end{equation}
From $(\ref{eq-1-closure})$ and $(\ref{eq-2-closure})$, we get that $0 \in \overline{G'_\theta \cdot M}$ (contradiction). Hence, each direct summand $M^j_i$ of $M$ is $\theta$-semistable, and so $M$ is semistable, i.e. $0 \notin \overline{G_{\theta} \cdot M}$. We have just proved that $\psi^{-1}(\mathbf{0})=\mathbf{0}$ which implies that $\psi$ is finite by Lemma \ref{finite-map-lemma}. 

Finally, let us quickly explain how the finiteness of $\psi$ implies that of $\Psi$. For this, we work with the following local description of $\Psi$. Denote by $R$ and $R'$ the homogeneous coordinate rings of the projcetive varieties $\M(C)^{ss}_{\theta} $ and $S^{m_1}(\mathcal{M}(C_1)^{ss}_{\theta}) \times \ldots \times S^{m_r}(\mathcal{M}(C_r)^{ss}_{\theta})$, respectively. Then, for any non-zero semi-invariant $f \in \SI(C)_{m\theta}$ with $m \geq 1$, the images through $\pi$ and $\pi'$ of the principal open subsets defined by $f$ are $\Spec((R[{1 \over f}])^{\GL(\dd)})$ and $\Spec((R'[{1 \over f'}])^{G'})$, respectively, where $f'$ is the restriction of $f$ to $C'$. Moreover, the preimage of $\Spec((R[{1 \over f}])^{\GL(\dd)})$ under $\Psi$ is precisely $\Spec((R'[{1 \over f'}])^{G'})$. It is immediate to see that $(R[{1 \over f}])^{\GL(\dd)}=R[{1 \over f}]_0$ and $(R'[{1 \over f'}])^{G'}=R'[{1 \over f'}]_0$. It now follows that the restriction of $\Psi$ to $\Spec((R'[{1 \over f'}])^{G'})$, taking values in $\Spec((R[{1 \over f}])^{\GL(\dd)})$, is a finite morphism of affine varieties since $\psi^*$ induces a finite homomorphism of rings $R[{1 \over f}]_0 \to R'[{1 \over f'}]_0$. So we see that $\Psi$ is a finite morphism, which completes the proof.
\end{proof}

\section{Example applications}
Our first example, in which the algebra $A$ is of wild representation type, demonstrates that $\Psi$ may fail to be an isomorphism and fail to be bijective even.  

\begin{example}\label{ex:notiso}
Let $Q$ be the following quiver with $Q_0=\{1, 2, 3\}$, four arrows from 1 to 2, and four arrows from 2 to 3.  By a standard abuse of notation, we refer to each collection of four arrows as $\{x_0, x_1, x_2, x_3\}$ as it simplifies the notation for the relations.
\[
Q:= \vcenter{\hbox{\begin{tikzpicture}[point/.style={shape=circle,fill=black,scale=.5pt,outer sep=3pt},>=latex]
   \node[point,label=above:{1}] (1) at (-4,0) {};
   \node[point,label=above:{2}] (2) at (0,0) {};
   \node[point,label=above:{3}] (3) at (4,0) {};
\path[->]
(1) edge node[midway,above] {$x_0, x_1, x_2, x_3$} (2)
(2) edge node[midway,above] {$x_0, x_1, x_2, x_3$} (3);
   \end{tikzpicture}}}
\]
For relations we take $\mathcal{R}=\{x_ix_j=x_jx_i\}_{i,j} \cup \{x_1x_2=0\}$.  (Note that the algebra $A=KQ/\langle R \rangle$ is wild since the subcategory of its representations which are zero at vertex 3 is equivalent to the category of representations of the 4-Kronecker quiver.)
Take dimension vector $\dd = (1,1,1)$ and denote a representation by
\begin{equation}\label{eq:repM}
M= \vcenter{\hbox{\begin{tikzpicture}[point/.style={shape=circle,fill=black,scale=.5pt,outer sep=3pt},>=latex]
   \node[point,label=above:{1}] (1) at (-4,0) {};
   \node[point,label=above:{2}] (2) at (0,0) {};
   \node[point,label=above:{3}] (3) at (4,0) {};
\path[->]
(1) edge node[midway,above] {$(a_0, a_1, a_2, a_3)$} (2)
(2) edge node[midway,above] {$(b_0, b_1, b_2, b_3)$} (3);
   \end{tikzpicture}}}
\end{equation}
where $a_i \in K$ is the entry of the $1\times 1$ matrix over arrow $1 \xrightarrow{x_i} 2$, and $b_i \in K$ is the entry of the $1\times 1$ matrix over arrow $2 \xrightarrow{x_i} 3$.
For weight $\theta = (2, -1, -1)$, the $\theta$-semistable representations are exactly those for which at least one $a_i$ and at least one $b_i$ is nonzero, 
and these are all in fact $\theta$-stable.
The relations $x_ix_j=x_jx_i$ then can be interpreted as saying that $(a_0, a_1, a_2, a_3), (b_0, b_1, b_2, b_3) \in K^4\setminus\{(0,0,0,0)\}$ are representatives of the same point in $\mathbb{P}^3$.

It is then straightforward to check that the map sending the isomorphism class of a representation $M$ as in \eqref{eq:repM} to the point $[a_0:a_1:a_2:a_3] \in \mathbb{P}^3$ gives an isomorphism
\begin{equation}\label{eq:m1iso}
\M(A, \dd)^{ss}_\theta = \M(A, \dd)^{s}_\theta \cong \setst{[a_0:a_1:a_2:a_3] \in \mathbb{P}^3}{a_1a_2 = 0}.
\end{equation}
Therefore, $\M(A, \dd)^{ss}_\theta$ is a two dimensional variety, with two irreducible components $Y_1 = \{a_1 = 0\}$ and $Y_2 = \{a_2 = 0\}$.  
% which intersect in the dimension 1 subvariety $Y_1 \cap Y_2 = \{a_1 = a_2 = 0\}$.  
Let $C_1, C_2 \subseteq \rep(A,\dd)$ be the corresponding irreducible components.

Now consider the closed subvariety $\M(\overline{C_1 \oplus C_2})^{ss}_{\theta} \subseteq \M(A, (2,2,2))^{ss}_{\theta}$ and the morphism
\[
\Psi\colon  \mathcal{M}(C_1)^{ss}_{\theta} \times \mathcal{M}(C_2)^{ss}_{\theta} \to \M(\overline{C_1 \oplus C_2})^{ss}_{\theta}
\]
of Theorem \ref{main-thm}.
Utilizing the isomorphism \eqref{eq:m1iso}, we claim that any two points in the domain of $\Psi$ of the form
\begin{equation}\label{eq:doublefiber}
p_1=[a_0:0:0:a_3] \times [a'_0:0:0:a'_3] \quad \text{and} \quad p_2=[a'_0:0:0:a'_3] \times [a_0:0:0:a_3]
\end{equation}
satisfy $\Psi(p_1)=\Psi(p_2)$. We can lift to representatives of $\Psi(p_1), \Psi(p_2)$, call them  $M_1, M_2 \in \rep(A,(2,2,2))$.  By construction, we have each $M_i \simeq M'_i \oplus M''_i$ where all $M'_i, M''_i \in C_1 \cap C_2$.  The form of the points $p_1, p_2$ shows that $M'_1 \simeq M''_2$ and $M'_2 \simeq M''_1$, so we have $M_1 \simeq M_2$, which implies $\Psi(p_1)=\Psi(p_2)$.
So we see that the points of $\M(\overline{C_1 \oplus C_2})^{ss}_{\theta}$ with disconnected preimage arise from $\theta$-stable points in the intersection $C_1 \cap C_2$ of the distinct components of the $\theta$-stable decomposition of $\overline{C_1 \oplus C_2}$.
\end{example}

In the following example, we apply the main result to illustrate how different isomorphism types of $\M(C)^{ss}_\theta$ may arise even when the $\M(C_i)^{ss}_\theta$ are all isomorphic to $\PP^1$.
%but the main theorem can be applied to see that for $\theta$-polystable representations the 
%in which we can see $\PP^1$ families of $\theta$-stable components combining to build $\theta$-semistable moduli spaces in different ways. \ryan{clarify this: specifically how combining  ``1-parameter families'' gives different geometry depending on the relation of these families.}

\begin{example}\label{ex:families}
Consider the special biserial algebra $A=KQ/\langle \mathcal{R} \rangle$ given by the following quiver with relations, and the weight $\theta = (2, -1, -1)$.
\[
Q:= \vcenter{\hbox{\begin{tikzpicture}[point/.style={shape=circle,fill=black,scale=.5pt,outer sep=3pt},>=latex]
   \node[point,label=left:{1}] (1) at (-2,0) {};
   \node[point,label=right:{2}] (2) at (2,0) {};
   \node[point,label=left:{3}] (3) at (0,2) {};
\path[->,bend left=20]
(1) edge node[midway, below] {$\alpha$} (2)
(2) edge node[midway, left] {$\gamma$} (3);
\path[->,bend left=20]
(1) edge node[midway, left] {$\beta$} (3)
(3) edge node[midway, above] {$\gamma'$} (2);
   \end{tikzpicture}}} \qquad \text{~and~} 
   \mathcal R:=\{\gamma \gamma', \gamma' \gamma\}.
 \]
For $\dd=(1,1,1)$, the representation variety $\rep(A,\dd) = \setst{(a, b, c, c') \in \mathbb{A}^4}{cc'= 0}$ has two irreducible components $C_1 = \{c = 0\}$ and $C_2 = \{c'=0\}$, both of which are $\theta$-stable.  It is straightforward to check that $\M(C_1)^{ss}_{\theta} \simeq \PP^1$ with homogeneous coordinates $(a:bc')$ and $\M(C_2)^{ss}_{\theta} \simeq \PP^1$ with homogeneous coordinates $(b:ac)$.  Their general elements are band modules, with points at 0 and $\infty$ being string modules.

For $\dd=(2,2,2)$, the representation variety
\[
\rep(A,\dd) = \setst{(A,B,C,C') \in (\Mat_{2 \times 2}(K))^4}{CC' = C'C=0}
\]
has three irreducible components, all of which are $\theta$-semi-stable; in fact, since $\ext^1_A(C_i, C_j) = 0$ for $i, j \in \{1, 2\}$ above, these components are just $\overline{C_1^{\oplus 2}}, \overline{C_1\oplus C_2}$, and $\overline{C_2^{\oplus 2}}$ which are  all normal. Since the $m$-th symmetric powers of $\PP^1$ is $\PP^m$, we get that
\[
\M(\overline{C_i^{\oplus 2}})^{ss}_{\theta} \simeq \PP^2, \ i=1,2,\quad \text{while} \quad \M(\overline{C_1\oplus C_2})^{ss}_\theta \simeq \PP^1 \times \PP^1.
\]
It is interesting to note that there is an elementary family of band modules
\[
\vcenter{\hbox{\begin{tikzpicture}[point/.style={shape=circle,fill=black,scale=.5pt,outer sep=3pt},>=latex]
   \node (1) at (-2,0) {$K^2$};
   \node (2) at (2,0) {$K^2$};
   \node (3) at (0,2) {$K^2$};
\path[->,bend left=20]
(1) edge node[midway, below] {$\left[\begin{smallmatrix} 1 & 0\\0 & 1\end{smallmatrix}\right]$} (2)
(2) edge node[midway, left] {$\left[\begin{smallmatrix} 0 & 1 \\0 & 0\end{smallmatrix}\right]$} (3);
\path[->,bend left=20]
(1) edge node[midway, left] {$\left[\begin{smallmatrix} 1 & 0\\0 & 1\end{smallmatrix}\right]$} (3)
(3) edge node[midway,right] {$\left[\begin{smallmatrix} 0 & \lambda \\0 & 0\end{smallmatrix}\right]$} (2);
   \end{tikzpicture}}} \qquad \lambda \in K.
\]
of this dimension vector which are semistable but not stable.  Every representation in this family has the same $\theta$-stable composition factors, namely the string modules $(0,1,0,1) \in C_1$ and $(1,0,1,0) \in C_2$ each with multiplicity 2, so this entire family is only represented by a single point in $\M(\overline{C_1\oplus C_2})^{ss}_\theta$.

This general pattern continues: given any irreducible component $\M(C)^{ss}_{\theta}$ of $\M(A,\dd)^{ss}_{\theta}$ for any $\dd$, by Theorem \ref{main-thm}(a) we may restrict our attention to the direct sum of the $\theta$-stable components of a $\theta$-stable decomposition of $C$. Then, applying Theorem \ref{main-thm}(b) to remove orbit closures from the $\theta$-stable decomposition, we may assume that $C=\overline{C_1^{\oplus m_1} \oplus C_2^{\oplus m_2}}$ for some $m_1, m_2 \in \ZZ_{\geq 0}$.  These are known to be normal, for example, by combining \cite[Theorem~11.3]{Lusztig} and  \cite[Theorem~8]{Faltingsloop}.  Therefore, Theorem \ref{main-thm}(c) gives that 
$$\M(\overline{C_1^{\oplus m_1} \oplus C_2^{\oplus m_2}})^{ss}_\theta \simeq \PP^{m_1} \times \PP^{m_2}.$$
This example is generalized to arbitrary special biserial algebras in \cite{CarChiKinWey2017}.
\end{example}

We end with a result on birational classification of moduli spaces for a certain class of algebras.  
These \emph{Schur-tame algebras} are, informally, generalizations of tame algebras which only require that families of nonisomorphic \emph{Schur} representations of the same dimension are at most one-dimensional.  
Moduli spaces of Schur-tame algebras were studied by the first author and A. Carroll in \cite{CC15}; they have previously been studied by L. Bodnarchuk and Y. Drozd in \cite{BodDro}.

\begin{corollary}\label{cor:tamerational}
Suppose that $A$ is a Schur-tame algebra (for example, a tame algebra).  Then the irreducible components of any moduli space $\M(A,\dd)^{ss}_{\theta}$ are rational varieties.
\end{corollary}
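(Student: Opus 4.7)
The plan is to reduce the statement to the rationality of the moduli spaces $\M(C_i)^{ss}_\theta$ attached to the individual $\theta$-stable components of a $\theta$-stable decomposition, via the machinery of Theorem \ref{main-thm}, and then to use Schur-tameness to identify these as rational projective curves.

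Concretely, given an irreducible component of $\M(A,\dd)^{ss}_\theta$, pick a $\GL(\dd)$-invariant irreducible closed subvariety $C \subseteq \rep(A,\dd)$ whose moduli space is that component. First I would apply Theorem \ref{main-thm}(a) to assume $C = \widetilde C = \overline{C_1^{\oplus m_1} \oplus \cdots \oplus C_r^{\oplus m_r}}$ for a $\theta$-stable decomposition, and then Theorem \ref{main-thm}(b) to discard all summands that are orbit closures (each contributes only a point to the moduli). With the remaining $C_i$, Theorem \ref{main-thm}(c) supplies a finite, birational morphism
\[
\Psi\colon \prod_{i=1}^r S^{m_i}(\M(C_i)^{ss}_{\theta}) \longrightarrow \M(C)^{ss}_{\theta}.
\]
Since rationality is a birational invariant, it suffices to prove that the source of $\Psi$ is rational.

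To handle a single factor $\M(C_i)^{ss}_{\theta}$, note that a general representation in $C_i$ is $\theta$-stable, hence a brick, and in particular a Schur representation. Because $C_i$ is not an orbit closure and all its $\theta$-stable representations are Schur, the generic Schur representation in $C_i$ varies in a positive-dimensional family. The Schur-tame hypothesis bounds such families by one parameter, so $\M(C_i)^{s}_{\theta}$ is a one-dimensional irreducible constructible set, and $\M(C_i)^{ss}_{\theta}$ is an irreducible projective curve admitting a dominant morphism from a rational one-parameter family (using the coarse moduli property applied to a suitable parametrization from \cite{CC15}). By L\"uroth's theorem any such curve is rational, so $\M(C_i)^{ss}_{\theta} \simeq_{\text{bir}} \PP^1$.

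Finally I would assemble the pieces: the normalization $\PP^1 \to \M(C_i)^{ss}_{\theta}$ induces a dominant, birational map $\PP^{m_i} = S^{m_i}(\PP^1) \to S^{m_i}(\M(C_i)^{ss}_{\theta})$, showing each symmetric-power factor is rational; products of rational varieties are rational; and $\Psi$ being birational transports rationality to $\M(C)^{ss}_{\theta}$. The main obstacle I foresee is the precise justification that Schur-tameness does supply a dominant morphism from an irreducible rational curve onto $\M(C_i)^{ss}_{\theta}$ (as opposed to merely producing finitely many isomorphism classes per parameter); this is where one must invoke the explicit parametric description of Schur components from \cite{CC15} rather than the bare dimensional estimate, to rule out that the "one-parameter family" is only generically finite-to-one onto a higher-dimensional moduli.
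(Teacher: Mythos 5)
Your proposal follows essentially the same route as the paper: reduce via Theorem~\ref{main-thm} to the rationality of the individual factors $\M(C_i)^{ss}_\theta$, use that symmetric powers and products of rational varieties are rational, and transport rationality across the birational map $\Psi$. The one place you labor is the identification of $\M(C_i)^{ss}_\theta$ as a rational curve: the paper simply invokes \cite[Proposition 12]{CC15}, which states directly that for a Schur-tame algebra such a moduli space is always either a point or a rational projective curve. Your attempted re-derivation via L\"uroth's theorem is in the right spirit, but you correctly flag that it needs the explicit parametrization from \cite{CC15} to rule out a generically finite-to-one map from a one-parameter family onto something higher dimensional — and once you invoke that, you are really just citing the same proposition. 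The cleaner move is to cite \cite[Proposition 12]{CC15} directly for the dichotomy (point or rational projective curve), as the paper does; then the orbit-closure reductions via part (b) become a convenience rather than a necessity, since points are trivially rational and their symmetric powers are too.
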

\begin{proof} Let $Y$ be an irreducible component and write $Y=\M(C)^{ss}_{\theta}$ for some irreducible component $C \subseteq \rep(A,\dd)$ with $C^{ss}_{\theta} \neq \emptyset$.
Retaining the notation of Theorem \ref{main-thm}, first consider each moduli space $\M(C_i)^{ss}_\theta$ of the components of the $\theta$-stable decomposition. It is shown for Schur-tame algebras in \cite[Proposition 12]{CC15} that such a moduli space $\M(C_i)^{ss}_{\theta}$ is always either a point or a rational projective curve. Since any symmetric power of a rational variety is rational and $\Psi$ is birational, $\M(C)^{ss}_\theta$ is rational as well.
\end{proof}

\bibliography{biblio-decomp}\label{biblio-sec}

\end{document}